\documentclass[12pt,a4paper]{article}
\usepackage{amsmath}
\usepackage{amsfonts,dsfont}
\usepackage{amssymb}
\usepackage{setspace}
\usepackage{amsthm}
\usepackage{rotating}
\usepackage[a4paper, left=2.5cm, right=2.5cm, top=2.8cm, bottom=2.8cm]{geometry}
\usepackage{float}
\usepackage{amsmath}
%usepackage{slashbox}
\usepackage{graphicx}
\usepackage{subfigure,bm}
\usepackage{natbib}
%%%ADDED BY KRYS
\usepackage{marginnote} %Package helping with margin notes within equations
                        %This package works well with ''pdflatex'' but not so well with standard ''latex''
                        %It is enough to comment the second line below to get rid of all margin-notes

%%%%%%%%

%%%ADDED BY Taras
%\usepackage{color}
%\newcommand{\colr}[1]{{\color{red} {#1}}}
%\usepackage{ulem}
%\newcommand{\del}[1]{{\color{red} \sout{#1}}}

\usepackage{dsfont}
%%%%%%END OF ADDITION%%%%%%%

\newcommand{\bol}[1]{\mbox{\boldmath$#1$}}
\newcommand{\bmu}{\bol{\mu}}

\newcommand{\bnu}{\bol{\nu}}

\newcommand{\bSigma}{\mathbf{\Sigma}}
\newcommand{\bOmega}{\mathbf{\Omega}}
\newcommand{\bM}{\mathbf{M}}

\newtheorem{theorem}{Theorem}
\newtheorem{proposition}{Proposition}

\newtheorem{corollary}{Corollary}

\usepackage{natbib}
\setcitestyle{numbers,square}
\bibliographystyle{apalike}

\begin{document}
\begin{center}
\vspace*{2cm} \noindent {\bf \large On the product of a singular Wishart matrix and a singular Gaussian vector in high dimension}\\
\vspace{1cm} \noindent {\sc Taras Bodnar$^{a,}$\footnote{Corresponding author. E-mail address: taras.bodnar@math.su.se. The first and the third authors appreciate the financial support of SIDA via the project 1683030302. The second author gratefully acknowledges financial support from the research project "Ambit Fields: Probabilistic
properties and statistical inference" funded by Villum Fonden.}, Stepan Mazur$^{b}$, Stanislas Muhinyuza$^{a,c}$, Nestor Parolya$^{d}$
}\\
\vspace{1cm}
{\it \footnotesize  $^a$ Department of Mathematics, Stockholm University, Roslagsv\"{a}gen 101, SE-10691 Stockholm, Sweden}\\
{\it \footnotesize  $^b$ Department of Mathematics, Aarhus University, Ny Munkegade 118, DK-8000 Aarhus, Denmark} \\
{\it \footnotesize $^c$Department of Mathematics, College of Science and Technology,University of Rwanda, P.O. Box 3900, Kigali-Rwanda}\\
{\it \footnotesize  $^d$ Institute of Statistics, Leibniz University of Hannover, D-30167 Hannover, Germany} \\
\end{center}

%%% ----------------------------------------------------------------------
\vspace{1cm}
\begin{abstract}
In this paper we consider the product of a singular Wishart random matrix and a singular normal random vector. A very useful stochastic representation is derived for this product, using which the characteristic function of the product and its asymptotic distribution under the double asymptotic regime are established.
The application of obtained stochastic representation speeds up the simulation studies where the product of a singular Wishart random matrix and a singular normal random vector is present. We further document a good performance of the derived asymptotic distribution within a numerical illustration. Finally, several important properties of the singular Wishart distribution are provided.
\end{abstract}

\vspace{1cm}
\noindent ASM Classification: 60E05, 60E10, 60F05, 62H10, 62E20 \\
\noindent {\it Keywords}: singular Wishart distribution, singular normal distribution, stochastic representation, high-dimensional asymptotics \\

\newpage
%------------------------------------------------------------------------------------------------------------------
% Section 1
%------------------------------------------------------------------------------------------------------------------
\section{Introduction}
The multivariate normal distribution is one of the basic central distributions and a building block in multivariate statistical analysis. It is also a standard assumption in many statistical applications where the normal distribution is usually accompanied by the Wishart distribution. Namely, if we consider a sample of size $n$ from $k$-dimensional normal distribution, the unbiased estimators for the mean vector and for the covariance matrix have a $k$-dimensional normal distribution and a $k$-dimensional Wishart distribution, respectively, as well as they are independent (see, e.g., \citet[Chapter 3]{Muirhead1982}).

A number of papers deal either with the properties of the sample mean vector or with the properties of the sample covariance matrix, although these two objects appear often together in the expressions of different statistics. Consequently, a question arises how the distributions of functions involving both a Wishart matrix and a normal vector could be derived. Recently, this topic has attracted a lot of attention in the literature from both the theoretical perspectives (c.f., \citet{bodnar2011product,BodnarMazurOkhrin2013}) and the applications (see, e.g., \citet{jobson1980estimation,kan2007optimal, bodnar2009sequential}). While \citet{bodnar2011product}, \citet{kotsiuba2015asymptotic} derived the exact distribution and the approximative distribution of the product of an inverse Wishart matrix and a normal vector, \citet{BodnarMazurOkhrin2013} extended these results to the case of the product of a Wishart matrix and a normal vector. It is remarkable that the product of an inverse Wishart matrix and a normal vector has a direct application in discriminant analysis (c.f., \citet{rencher2012wiley}) and in portfolio theory (see, e.g., \citet{britten1999sampling}). On the other hand, the product of a Wishart matrix and a normal vector can be considered from the viewpoint of Bayesian statistics when inferring the discriminant function and the portfolio weights by employing the inverse Wishart - normal prior which appears to be a conjugate prior for the mean vector and the covariance matrix under normality (see, e.g., \citet{BernardoSmith1994}).

Singular covariance matrix is present in practical applications, especially when data are drawn from a large-dimensional process. For example, the construction of an optimal portfolio with a singular covariance matrix has become an important topic in finance (see, e.g., \citet{PappasKiriakopoulosKaimakamis2010, BodnarMazurPodgorski2015}). While the normal distribution with the singular covariance matrix is known as the singular normal distribution in statistical literature, there is no unique definition in the case of the Wishart distribution. The singular Wishart distribution introduced by \citet{khatri1970note} and \citet{srivastavaintroduction} deals with the case when the sample size is smaller than the process dimension. The practical relevance of this case is discussed in \citet{uhlig1994singular}, while some theoretical finding were derived in \citet{Srivastava2003}, \citet{BodnarOkhrin2008}, \citet{BodnarMazurOkhrin2014}. Another type of the singular Wishart distribution, the so-called pseudo-Wishart distribution, was discussed in \citet{diaz1997wishart}. The covariance matrix is assumed to be singular in this family of Wishart distributions. Later on, we refer to the Wishart distribution with a singular covariance matrix independently if the sample size is larger or smaller than the process dimension as the singular Wishart distribution.

In the present paper, we contribute to the existent literature on the singular Wishart distribution and the singular normal distribution by considering both distributions not separately but in a combination. We derive a very useful stochastic representation for the product of a singular Wishart matrix and a normal vector which provides an elegant way of characterizing the finite sample distribution of the product as well as it appears to be very useful in the derivation of its asymptotic distribution under the high-dimensional asymptotic regime, i.e. when both the sample size and the process dimension become very large.

The rest of the paper is structured as follows. Section 2 contains several distributional properties of the singular Wishart distribution which are used as a tool to prove the main results of the paper presented in Section 3. Here, the distribution of the product of a singular Wishart matrix and a singular normal random vector is derived in terms of a stochastic representation from which we also obtain the characteristic function of the product. Furthermore, we prove the asymptotic normality of the product under the high-dimensional asymptotic regime. The finite sample performance of the obtained asymptotic results is discussed in Section 4, while Section 5 presents the summary.

%-------------------------------------------------------------------------------------------------------------------
%     Section 2
%-------------------------------------------------------------------------------------------------------------------

\section{Preliminary results}
\label{s2}

In this section, we present several distributional properties of the singular Wishart distribution which are used in proving the main results of the paper.

Let $\mathbf A \sim \mathcal W _k (n, \bSigma )$, i.e., the random matrix $\mathbf A$ has a $k$-dimensional singular Wishart distribution with $n$ degrees of
freedom and covariance matrix $\bSigma$ which is positive semi-definite with $rank (\bSigma) = r < k$. Throughout the paper, no assumption is made about the relationship between the sample size $n$ and the process dimension $k$. The results are valid in both cases $n \ge k$ (Wishart distribution with positive semi-definite covariance matrix $\bSigma$) and $k<n$ (singular Wishart distribution with positive semi-definite covariance matrix $\bSigma$). Also, let $\mathbf I _k $ be the $k\times k$ identity matrix, $\otimes$ stands for the Kronecker product, and the symbol $\stackrel{d}{=}$ denotes the equality in distribution.

In Proposition \ref{th1}, we derive the distribution of a linear symmetric transformation of the singular Wishart random matrix.

%-------------------------------------------------------------------------------------------------------------------
%     Theorem 1
%-------------------------------------------------------------------------------------------------------------------

\begin{proposition}
\label{th1}
Let $\mathbf A \sim \mathcal W _{k} (n, \bSigma)$ with $rank (\bSigma)=r < k$ and let $\mathbf M: p\times k$ be a matrix of constants with $rank(\bM)=p$ such that $\mathbf M \bSigma \neq \mathbf 0 $.
Then
 $$\mathbf {MAM} ^T \sim \mathcal W _{p} (n, \mathbf {M \Sigma M} ^T) .$$

 Moreover, if $rank (\mathbf M \bSigma)=p \leq r $, then $\mathbf {MAM} ^T$ and $\mathbf {M\Sigma M} ^T$ are of the full rank $p$.
\end{proposition}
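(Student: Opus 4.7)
The plan is to reduce everything to the classical Gaussian construction of the Wishart distribution, which remains valid whether $\bSigma$ is singular or not, and whether $n\ge k$ or $n<k$. First, use the representation $\mathbf{A}\stackrel{d}{=}\mathbf{X}\mathbf{X}^T$, where $\mathbf{X}=(\mathbf{x}_1,\ldots,\mathbf{x}_n)$ has i.i.d.\ columns $\mathbf{x}_i\sim\mathcal{N}_k(\mathbf{0},\bSigma)$; this is exactly the definition of $\mathcal{W}_k(n,\bSigma)$ adopted in the paper. Premultiplying by $\bM$ and postmultiplying by $\bM^T$ gives $\bM\mathbf{A}\bM^T\stackrel{d}{=}(\bM\mathbf{X})(\bM\mathbf{X})^T$. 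Because any linear transformation of a (possibly degenerate) Gaussian vector is again Gaussian, the columns $\bM\mathbf{x}_i$ are i.i.d.\ $\mathcal{N}_p(\mathbf{0},\bM\bSigma\bM^T)$, and the same definition immediately yields $\bM\mathbf{A}\bM^T\sim\mathcal{W}_p(n,\bM\bSigma\bM^T)$, which is the first assertion.

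For the rank claim, factor $\bSigma=\mathbf{B}\mathbf{B}^T$ with $\mathbf{B}$ of size $k\times r$ and full column rank $r$ (e.g.\ from the spectral decomposition of $\bSigma$). Using the standard identity that right-multiplication by a matrix of full column rank does not alter the rank of the left factor, one obtains
\begin{equation*}
\mathrm{rank}(\bM\bSigma)=\mathrm{rank}(\bM\mathbf{B}\mathbf{B}^T)=\mathrm{rank}(\bM\mathbf{B})\quad\text{and}\quad \mathrm{rank}(\bM\bSigma\bM^T)=\mathrm{rank}\bigl((\bM\mathbf{B})(\bM\mathbf{B})^T\bigr)=\mathrm{rank}(\bM\mathbf{B}),
\end{equation*}
so the hypothesis $\mathrm{rank}(\bM\bSigma)=p$ forces $\mathrm{rank}(\bM\bSigma\bM^T)=p$; i.e.\ $\bM\bSigma\bM^T$ is positive definite. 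Consequently, the columns of $\bM\mathbf{X}$ are non-degenerate $p$-dimensional Gaussians, and (implicitly requiring $n\ge p$, which is necessary for the conclusion to even be possible) the $p\times n$ matrix $\bM\mathbf{X}$ has full row rank $p$ almost surely; hence $\bM\mathbf{A}\bM^T=(\bM\mathbf{X})(\bM\mathbf{X})^T$ has full rank $p$ with probability one.

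The argument is essentially routine once the Gaussian representation of $\mathbf{A}$ is on the table. The only step requiring any care is the rank identity $\mathrm{rank}(\bM\bSigma\bM^T)=\mathrm{rank}(\bM\bSigma)$, which fails to be a one-liner because $\bSigma$ is singular; I expect this to be the main (minor) obstacle, and introducing the rectangular square-root $\mathbf{B}$ is what makes it clean.
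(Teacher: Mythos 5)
Your argument is correct and essentially identical to the paper's: both pass through the Gaussian outer-product representation $\mathbf{A}\stackrel{d}{=}\mathbf{X}\mathbf{X}^T$ with $\mathbf{X}\sim\mathcal{N}_{k,n}(\mathbf{0},\bSigma\otimes\mathbf{I}_n)$ and note that $\bM\mathbf{X}$ is again Gaussian with covariance $(\bM\bSigma\bM^T)\otimes\mathbf{I}_n$, which gives $\bM\mathbf{A}\bM^T\sim\mathcal{W}_p(n,\bM\bSigma\bM^T)$. You additionally spell out the rank assertion (which the paper's proof leaves implicit), and that part is sound too, up to the small slip that the identity $\mathrm{rank}(\bM\mathbf{B}\mathbf{B}^T)=\mathrm{rank}(\bM\mathbf{B})$ rests on $\mathbf{B}^T$ having full \emph{row} rank $r$, not full column rank.
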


%-------------------------------------------------------------------------------------------------------------------
%    Proof of Theorem 1
%-------------------------------------------------------------------------------------------------------------------

\begin{proof}
From Theorem 5.2 of \citet{Srivastava2003} we have that the stochastic representation of $\mathbf A$ is given by
\begin{eqnarray*}
\mathbf A \stackrel{d}{=} \mathbf {XX} ^T\ \ \ \ \text{with}\ \ \ \ \mathbf X \sim \mathcal N _{k,n} (\mathbf 0, \bSigma\otimes \mathbf I _n).
\end{eqnarray*}

Then using Theorem 2.4.2 of \citet{GuptaNagar2000} we get
\begin{eqnarray*}
\mathbf {MAM} ^T \stackrel{d}{=} \mathbf {MXX} ^T \mathbf M^T\stackrel{d}{=} \mathbf {YY}^T,
\end{eqnarray*}
where $\mathbf Y \sim \mathcal {N} _{p,n} (\mathbf 0, (\mathbf {M\Sigma M}^T) \otimes \mathbf I_n)$.
This completes the proof of the proposition.
\end{proof}

%-------------------------------------------------------------------------------------------------------------------
%     Theorem 2
%-------------------------------------------------------------------------------------------------------------------

An application of Proposition \ref{th1} leads the following result summarized in Proposition \ref{th2}.

\begin{proposition}
\label{th2}
Let $\mathbf A \sim \mathcal W _{k} (n, \bSigma)$ with $rank (\bSigma)=r < k$ and let $\mathbf W: p\times k$ be a random matrix which is independent of $\mathbf A$ such that $rank (\mathbf W \bSigma)=p \leq r \leq n $ with probability one. Then
\begin{eqnarray*}
(\mathbf W \bSigma \mathbf W ^ T )^{-1/2} (\mathbf W \mathbf A \mathbf W ^T) (\mathbf W \bSigma \mathbf W ^ T )^{-1/2} \sim \mathcal W _p (n, \mathbf I_p)
\end{eqnarray*}
and is independent of $\mathbf W$.
\end{proposition}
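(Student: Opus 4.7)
The plan is to proceed by conditioning on $\mathbf W$. Since $\mathbf W$ and $\mathbf A$ are independent, the conditional distribution of $\mathbf A$ given $\mathbf W = \mathbf w$ is still $\mathcal W_k(n, \bSigma)$ for each realization $\mathbf w$ in the support of $\mathbf W$.

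Next, I would invoke Proposition \ref{th1} with the deterministic matrix $\mathbf M = \mathbf w$: conditional on $\mathbf W = \mathbf w$, the hypothesis $rank(\mathbf w \bSigma) = p \leq r$ (which holds almost surely by assumption) ensures that $\mathbf w \mathbf A \mathbf w^T \sim \mathcal W_p(n, \mathbf w \bSigma \mathbf w^T)$, and moreover $\mathbf w \bSigma \mathbf w^T$ is of full rank $p$. Since this matrix is strictly positive definite, its symmetric square root $(\mathbf w \bSigma \mathbf w^T)^{-1/2}$ is well defined almost surely.

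Then I would apply Proposition \ref{th1} a second time, this time with the constant matrix $(\mathbf w \bSigma \mathbf w^T)^{-1/2}$ acting on the (nonsingular) Wishart matrix $\mathbf w \mathbf A \mathbf w^T$. This yields
\begin{equation*}
(\mathbf w \bSigma \mathbf w^T)^{-1/2} (\mathbf w \mathbf A \mathbf w^T) (\mathbf w \bSigma \mathbf w^T)^{-1/2} \sim \mathcal W_p\bigl(n, (\mathbf w \bSigma \mathbf w^T)^{-1/2} (\mathbf w \bSigma \mathbf w^T) (\mathbf w \bSigma \mathbf w^T)^{-1/2}\bigr) = \mathcal W_p(n, \mathbf I_p),
\end{equation*}
conditionally on $\mathbf W = \mathbf w$.

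The key observation is that this conditional distribution does not depend on $\mathbf w$. Therefore, the standard argument (e.g., computing the joint characteristic function, or equivalently the law of total probability applied to any measurable set) shows both that the unconditional distribution of the transformed matrix is $\mathcal W_p(n, \mathbf I_p)$ and that it is independent of $\mathbf W$. The main subtlety to be careful about is the measure-theoretic handling of the "almost surely" assumption on $rank(\mathbf W \bSigma)$, which only needs to be checked to ensure that the square-root matrix is well-defined on a set of probability one; this is a minor technical point rather than a real obstacle.
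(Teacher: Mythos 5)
Your proposal is correct and follows essentially the same route as the paper: condition on $\mathbf W$, apply Proposition \ref{th1} to reduce to the identity-covariance Wishart, and conclude from the fact that the conditional law does not depend on the conditioning value. The only cosmetic difference is that you invoke Proposition \ref{th1} twice (once with $\mathbf M=\mathbf w$ and once with $\mathbf M=(\mathbf w\bSigma\mathbf w^T)^{-1/2}$) where the paper applies it once with $\mathbf M=(\mathbf W_0\bSigma\mathbf W_0^T)^{-1/2}\mathbf W_0$.
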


%-------------------------------------------------------------------------------------------------------------------
%  Proof of  Theorem 2
%-------------------------------------------------------------------------------------------------------------------

\begin{proof}
Using the fact that $\mathbf W$ and $\mathbf A$ are independently distributed,  we obtain that the conditional distribution of $\mathbf {WAW} ^T | (\mathbf W=\mathbf W_0)$ is equal to the distribution of $\mathbf W_0 \mathbf A \mathbf W_0^T$.
Then applying Proposition~\ref{th1} we obtain
\begin{eqnarray*}
(\mathbf W_0 \bSigma \mathbf W_0 ^ T )^{-1/2} (\mathbf W_0 \mathbf A \mathbf W _0^T) (\mathbf W_0 \bSigma \mathbf W_0 ^ T )^{-1/2} \sim \mathcal W _p (n, \mathbf I_p).
\end{eqnarray*}
Since this distribution does not depend on $\mathbf W$, it is also the unconditional distribution of $(\mathbf W \bSigma \mathbf W ^ T )^{-1/2} (\mathbf W \mathbf A \mathbf W ^T) (\mathbf W \bSigma \mathbf W ^ T )^{-1/2}$. The proposition is proved.
\end{proof}

%-------------------------------------------------------------------------------------------------------------------
%     Corollary 1
%-------------------------------------------------------------------------------------------------------------------

In the next corollary, we consider a special case of Proposition \ref{th2} with $p=1$.

\begin{corollary}
\label{c1}
Let $\mathbf A \sim \mathcal W _{k} (n, \bSigma)$ with $rank (\bSigma)=r \leq k$ and let $\mathbf w$ be a $p$-dimensional vector which is independent of $\mathbf A$ with $P (\mathbf w^T \bSigma = \mathbf{0})=0$. Then
\begin{eqnarray*}
\frac{\mathbf w ^T \mathbf {Aw} } {\mathbf w ^T \bSigma \mathbf w} \sim \chi^2_{n},
\end{eqnarray*}
and is independent of $\mathbf w$.
\end{corollary}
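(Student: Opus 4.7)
The plan is to obtain this as the scalar specialization of Proposition~\ref{th2} with $p=1$ and $\mathbf{W} = \mathbf{w}^T$. I read the corollary as treating $\mathbf{w}$ as a $k$-dimensional random vector (so that $\mathbf{w}^T\mathbf{Aw}$ is well-defined), with the hypotheses mirroring those of Proposition~\ref{th2}: independence of $\mathbf{w}$ and $\mathbf{A}$, and a rank condition on $\mathbf{w}^T\bSigma$.

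First I would verify the hypotheses of Proposition~\ref{th2}. Setting $\mathbf{W}=\mathbf{w}^T$, this is a $1\times k$ random ``matrix'' that is independent of $\mathbf{A}$ by assumption. The condition $P(\mathbf{w}^T\bSigma=\mathbf{0})=0$ states that $\mathbf{w}^T\bSigma$ is a nonzero $1\times k$ row vector with probability one, which is exactly the statement that $\operatorname{rank}(\mathbf{w}^T\bSigma)=1=p$ almost surely. Since $\bSigma\neq \mathbf{0}$ (otherwise the probability above would be $1$), we have $r\geq 1=p$, so the chain $p\leq r\leq n$ required by Proposition~\ref{th2} holds.

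Next I would invoke Proposition~\ref{th2} directly to conclude
\begin{equation*}
(\mathbf{w}^T\bSigma\mathbf{w})^{-1/2}\,(\mathbf{w}^T\mathbf{A}\mathbf{w})\,(\mathbf{w}^T\bSigma\mathbf{w})^{-1/2} \;\sim\; \mathcal{W}_1(n,\mathbf{I}_1),
\end{equation*}
and that the left-hand side is independent of $\mathbf{w}$. Since every factor here is a scalar, the left-hand side simplifies to $\mathbf{w}^T\mathbf{A}\mathbf{w}/\mathbf{w}^T\bSigma\mathbf{w}$, and the one-dimensional Wishart with identity scale and $n$ degrees of freedom coincides with $\chi^2_n$. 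This finishes the argument.

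There is essentially no substantive obstacle; the only point that merits care is the translation of the almost-sure statement $P(\mathbf{w}^T\bSigma=\mathbf{0})=0$ into the rank condition $\operatorname{rank}(\mathbf{w}^T\bSigma)=1$ with probability one, which is needed to apply Proposition~\ref{th2} after conditioning on $\mathbf{w}$. Everything else is bookkeeping: noting that scalars commute with the inverse square root and identifying $\mathcal{W}_1(n,1)$ with the chi-squared law.
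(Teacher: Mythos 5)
Your proposal is correct and follows exactly the route the paper intends: the corollary is stated there as the $p=1$, $\mathbf{W}=\mathbf{w}^T$ specialization of Proposition~\ref{th2}, with $\mathcal{W}_1(n,1)$ identified as $\chi^2_n$. Your extra care in reading the hypothesis $P(\mathbf{w}^T\bSigma=\mathbf{0})=0$ as the rank condition $rank(\mathbf{w}^T\bSigma)=1$ almost surely (and in noting that $\mathbf{w}$ should be $k$-dimensional) is a sensible tightening of the paper's slightly loose statement.
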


%-------------------------------------------------------------------------------------------------------------------
%    Main Results
%-------------------------------------------------------------------------------------------------------------------

\section{Main results}
\label{s3}

In this section, we present the main results of the paper which are complementary to the ones obtained in \citet{BodnarMazurOkhrin2013,BodnarMazurOkhrin2014} to the case of high-dimensional data and singular covariance matrix.

%-------------------------------------------------------------------------------------------------------------------
%     Finite sample results
%-------------------------------------------------------------------------------------------------------------------

\subsection{Finite sample results}

Let $\mathbf z$ be $k$-dimensional singular normally distributed random vector with mean vector $\bmu$ and covariance matrix $\kappa \bSigma$, $\kappa >0$, such that $rank (\bSigma) = r < k$, i.e. $\mathbf z \sim \mathcal N _k (\bmu , \kappa \bSigma)$. Also, let $\mathbf M$ be a $p\times k$ matrix of constants with $rank (\mathbf M)=p \leq r \leq \min\{n,k\}$ such that $ \mathbf {M \Sigma} \neq \mathbf 0$. We are interested in the distribution of $\mathbf {MAz}$ when $\mathbf A$ and $\mathbf z$ are independently distributed where $\mathbf A$ has a singular Wishart distribution as defined in Section \ref{s2}.

In Theorem \ref{th3}, we derive a stochastic representation for $\mathbf {MAz}$. The stochastic representation is an tool in the theory of multivariate statistics and it is frequently used in Monte Carlo simulations (c.f., \citet{givens2012computational}). Its importance in the theory of elliptically contoured distributions is well described by \citet{GuptaVargaBodnar2013}.

%-------------------------------------------------------------------------------------------------------------------
%     Theorem 3
%-------------------------------------------------------------------------------------------------------------------

\begin{theorem}
\label{th3}
Let $\mathbf A \sim \mathcal W _{k} (n, \bSigma)$ with $rank (\bSigma)=r < k$ and let $\mathbf z \sim \mathcal N _k (\bmu, \kappa \bSigma)$, $\kappa>0$.
We assume that $\mathbf A$ and $\mathbf z$ are independently distributed. Also, let $\mathbf M : p \times k$ be a matrix of constants of rank $p<r\le n$ and denote $\mathbf Q = \mathbf P^T \mathbf P$ with $\mathbf P= (\mathbf {M\Sigma M} ^T)^{-1/2} \mathbf M \bSigma^{1/2}$. Then the stochastic representation of $\mathbf {MAz}$ is given by
\begin{eqnarray*}
\mathbf {MAz}
&\stackrel{d}{=}&
\zeta \mathbf {M\Sigma}^{1/2} \mathbf t
+
\sqrt{\zeta}
(\mathbf {M\Sigma M}^T)^{1/2}
\left[
\sqrt{\mathbf t ^T  \mathbf t} \mathbf I _p
-
\frac {\sqrt{\mathbf t ^T \mathbf t} - \sqrt{\mathbf t ^T (\mathbf I_k -  \mathbf Q )\mathbf t}}  {\mathbf t ^T  \mathbf Q  \mathbf t}
\mathbf P \mathbf {tt}^T \mathbf P^T
\right]
\mathbf z_0,
\end{eqnarray*}
where $\zeta \sim \chi^2_n$, $\mathbf t \sim \mathcal N _k (\bSigma^{1/2}\bmu, \kappa \bSigma^2)$, and $\mathbf z_0\sim \mathcal N _p (\mathbf 0, \mathbf I _p)$;
$\zeta$, $\mathbf t$, and $\mathbf z_0$ are mutually independent.
\end{theorem}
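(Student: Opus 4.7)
The plan is to represent the singular Wishart matrix $\mathbf{A}$ as $\mathbf{X}\mathbf{X}^T$ with $\mathbf{X}\sim\mathcal{N}_{k,n}(\mathbf{0},\bSigma\otimes\mathbf{I}_n)$ via Theorem 5.2 of \citet{Srivastava2003}, rewrite $\mathbf{MAz}=(\mathbf{MX})(\mathbf{X}^T\mathbf{z})$, and then condition on $\mathbf{z}$ and on the $n$-vector $\mathbf{w}:=\mathbf{X}^T\mathbf{z}$, using column-wise Gaussian conditioning of $\mathbf{MX}$ given $\mathbf{X}^T\mathbf{z}$. The three ingredients of the stochastic representation should then emerge naturally: the chi-squared factor $\zeta$ as the (normalized) squared length of $\mathbf{w}$, the vector $\mathbf{t}=\bSigma^{1/2}\mathbf{z}$ as the direction carried by $\mathbf{z}$, and the independent standard Gaussian $\mathbf{z}_0$ from the residual of the Gaussian conditioning.

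Concretely, conditioning on $\mathbf{z}$, the i.i.d.\ columns $\mathbf{x}_j$ of $\mathbf{X}$ make the pairs $(\mathbf{M}\mathbf{x}_j,\mathbf{x}_j^T\mathbf{z})$ jointly Gaussian with cross-covariance $\mathbf{M}\bSigma\mathbf{z}$ and scalar variance $\mathbf{z}^T\bSigma\mathbf{z}>0$ almost surely, so standard Gaussian conditioning yields $\mathbf{M}\mathbf{x}_j=(\mathbf{x}_j^T\mathbf{z})\mathbf{M}\bSigma\mathbf{z}/(\mathbf{z}^T\bSigma\mathbf{z})+\mathbf{e}_j$ with $\mathbf{e}_j$ i.i.d.\ $\mathcal{N}_p(\mathbf{0},\mathbf{S})$ independent of $\mathbf{w}$, where $\mathbf{S}=\mathbf{M}\bSigma\mathbf{M}^T-\mathbf{M}\bSigma\mathbf{z}\mathbf{z}^T\bSigma\mathbf{M}^T/(\mathbf{z}^T\bSigma\mathbf{z})$ is the Schur complement. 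Summing over $j$ gives $\mathbf{MAz}=\|\mathbf{w}\|^2\mathbf{M}\bSigma\mathbf{z}/(\mathbf{z}^T\bSigma\mathbf{z})+\sum_j w_j\mathbf{e}_j$, whose noise term, conditionally on $(\mathbf{z},\mathbf{w})$, is $\mathcal{N}_p(\mathbf{0},\|\mathbf{w}\|^2\mathbf{S})$. Corollary~\ref{c1} now identifies $\zeta:=\|\mathbf{w}\|^2/(\mathbf{z}^T\bSigma\mathbf{z})=\mathbf{z}^T\mathbf{A}\mathbf{z}/(\mathbf{z}^T\bSigma\mathbf{z})\sim\chi^2_n$ independent of $\mathbf{z}$; since the conditional law depends on $\mathbf{w}$ only through $\|\mathbf{w}\|^2$, a fresh $\mathbf{z}_0\sim\mathcal{N}_p(\mathbf{0},\mathbf{I}_p)$ independent of $(\mathbf{z},\zeta)$ can be introduced to deliver the intermediate form $\mathbf{MAz}\stackrel{d}{=}\zeta\,\mathbf{M}\bSigma\mathbf{z}+\sqrt{\zeta(\mathbf{z}^T\bSigma\mathbf{z})}\,\mathbf{S}^{1/2}\mathbf{z}_0$.

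The remaining, and most delicate, step is to recast $\mathbf{S}^{1/2}$ in the bracketed form stated in the theorem. Setting $\mathbf{t}=\bSigma^{1/2}\mathbf{z}$, which has the advertised $\mathcal{N}_k(\bSigma^{1/2}\bmu,\kappa\bSigma^2)$ distribution, I factor $\mathbf{S}=(\mathbf{M}\bSigma\mathbf{M}^T)^{1/2}(\mathbf{I}_p-\mathbf{h}\mathbf{h}^T)(\mathbf{M}\bSigma\mathbf{M}^T)^{1/2}$ with $\mathbf{h}=\mathbf{P}\mathbf{t}/\sqrt{\mathbf{t}^T\mathbf{t}}$ and $\|\mathbf{h}\|^2=\mathbf{t}^T\mathbf{Q}\mathbf{t}/(\mathbf{t}^T\mathbf{t})\le 1$, the inequality following from $\mathbf{P}\mathbf{P}^T=\mathbf{I}_p$ so that $\mathbf{Q}=\mathbf{P}^T\mathbf{P}$ is an orthogonal projector of rank $p$. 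The main obstacle is the rank-one square-root identity $(\mathbf{I}_p-\mathbf{h}\mathbf{h}^T)^{1/2}=\mathbf{I}_p-\frac{1-\sqrt{1-\|\mathbf{h}\|^2}}{\|\mathbf{h}\|^2}\mathbf{h}\mathbf{h}^T$, which I would verify by squaring and solving a scalar quadratic for the coefficient. Substituting this identity, absorbing the scalar $\sqrt{\mathbf{t}^T\mathbf{t}}$ into the bracket, and translating $\mathbf{h}$ back through $\mathbf{P}$ and $\mathbf{Q}$ produces precisely the coefficient $(\sqrt{\mathbf{t}^T\mathbf{t}}-\sqrt{\mathbf{t}^T(\mathbf{I}_k-\mathbf{Q})\mathbf{t}})/(\mathbf{t}^T\mathbf{Q}\mathbf{t})$ in front of $\mathbf{P}\mathbf{t}\mathbf{t}^T\mathbf{P}^T$ that appears in the theorem.
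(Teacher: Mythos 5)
Your proposal is correct, and it reaches the intermediate representation \eqref{th3_eq1} by a genuinely different route than the paper. The paper augments $\mathbf M$ with the row $\mathbf z^{*T}$, invokes Proposition~\ref{th1} to conclude $\widetilde{\mathbf A}=\widetilde{\mathbf M}\mathbf A\widetilde{\mathbf M}^T\sim\mathcal W_{p+1}(n,\widetilde\bSigma)$, and then cites Theorem 3.2.10 of Muirhead for the conditional law of the off-diagonal block $\widetilde{\mathbf A}_{12}=\mathbf{MAz}^*$ given $\widetilde A_{22}=\mathbf z^{*T}\mathbf A\mathbf z^*$; you instead work directly from the factorization $\mathbf A\stackrel{d}{=}\mathbf X\mathbf X^T$ and carry out the column-wise Gaussian conditioning of $\mathbf M\mathbf x_j$ on $\mathbf x_j^T\mathbf z$ by hand. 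Your Schur complement $\mathbf S$ is exactly the paper's $\widetilde\bSigma_{11\cdot 2}$ and your conditional mean $\|\mathbf w\|^2\mathbf M\bSigma\mathbf z/(\mathbf z^T\bSigma\mathbf z)$ is $\widetilde\bSigma_{12}\widetilde\Sigma_{22}^{-1}\widetilde A_{22}$, so in effect you re-derive the needed special case of Muirhead's block-conditional result from first principles; this makes the argument more self-contained (only Corollary~\ref{c1} and elementary Gaussian conditioning are used) at the cost of a slightly longer computation, and it yields the marginally stronger observation that the conditional law given the whole vector $\mathbf w$ depends on $\mathbf w$ only through $\|\mathbf w\|^2$, which cleanly justifies the mutual independence of $\zeta$, $\mathbf z$, and $\mathbf z_0$. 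From the intermediate representation onward the two arguments coincide: your factorization $\mathbf S=(\mathbf{M\Sigma M}^T)^{1/2}(\mathbf I_p-\mathbf h\mathbf h^T)(\mathbf{M\Sigma M}^T)^{1/2}$ with $\mathbf h=\mathbf P\mathbf t/\sqrt{\mathbf t^T\mathbf t}$ and the rank-one square-root identity with coefficient $(1-\sqrt{1-\|\mathbf h\|^2})/\|\mathbf h\|^2$ are precisely the paper's identity $(\mathbf D-\mathbf b\mathbf b^T)^{1/2}=\mathbf D^{1/2}(\mathbf I_p-c\,\mathbf D^{-1/2}\mathbf b\mathbf b^T\mathbf D^{-1/2})$, and your use of $\mathbf P\mathbf P^T=\mathbf I_p$ to see that $\mathbf Q$ is a projector (hence $\|\mathbf h\|^2\le1$, so the square root is real and the factor positive semi-definite) is a point the paper leaves implicit. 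Note only that, as in the paper, the resulting matrix is a (non-symmetric) factor $\mathbf B$ with $\mathbf B\mathbf B^T=\mathbf S$ rather than the symmetric square root, which suffices for the distributional identity $\mathbf B\mathbf z_0\sim\mathcal N_p(\mathbf 0,\mathbf S)$.
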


%-------------------------------------------------------------------------------------------------------------------
%    Proof of Theorem 3
%-------------------------------------------------------------------------------------------------------------------

\begin{proof}
Since $\mathbf A$ and $\mathbf z$ are independently distributed it holds that the conditional distribution of $\mathbf {MAz} | (\mathbf z = \mathbf z ^*)$ is equal to the distribution of $\mathbf {MAz} ^*$.

Let $\widetilde{ \mathbf M}$ be the matrix which is obtained from $\mathbf M$ by adding a row vector $\mathbf z^*$, i.e. $\widetilde{ \mathbf M} = (\mathbf M^T, \mathbf z^*)^T$. Consider the following two partitioned matrices
\begin{eqnarray*}
\widetilde {\mathbf A}
=
\widetilde {\mathbf M} \mathbf A \widetilde {\mathbf M}^T
=
\left(
\begin{array}{cc}
\mathbf {MAM}^T & \mathbf {MAz}^* \\
\mathbf z ^{*T} \mathbf {A M}^T & \mathbf z ^{*T} \mathbf {A z} ^*
\end{array}
\right)
=
\left(
\begin{array}{cc}
\widetilde {\mathbf A}_{11} & \widetilde {\mathbf A}_{12}\\
\widetilde {\mathbf A}_{21} & \widetilde {A}_{22}
\end{array}
\right)
\end{eqnarray*}
and
\begin{eqnarray*}
\widetilde {\mathbf \Sigma}
=
\widetilde {\mathbf M} \mathbf \Sigma \widetilde {\mathbf M}^T
=
\left(
\begin{array}{cc}
\mathbf {M\Sigma M}^T & \mathbf {M\Sigma z}^* \\
\mathbf z ^{*T} \mathbf {\Sigma M}^T & \mathbf z ^{*T} \mathbf {\Sigma z} ^*
\end{array}
\right)
=
\left(
\begin{array}{cc}
\widetilde {\mathbf \Sigma}_{11} & \widetilde {\mathbf \Sigma}_{12}\\
\widetilde {\mathbf \Sigma}_{21} & \widetilde {\Sigma}_{22}
\end{array}
\right)
\end{eqnarray*}

Since $\mathbf A \sim \mathcal W _k (n,\bSigma)$ and $rank ( \widetilde {\mathbf M} ) = p+1\leq r $, it holds that $\widetilde {\mathbf A} \sim \mathcal W_{p+1} (n, \widetilde \bSigma)$ following Proposition \ref{th1}. Using Theorem 3.2.10 of \citet{Muirhead1982}, we get the conditional distribution of $\widetilde {\mathbf A} _{12}= \mathbf {MAz}^*$ given $\widetilde {A}_{22}$ expressed as
\begin{eqnarray*}
\widetilde {\mathbf A} _{12} | \widetilde A _{22}
\sim
\mathcal N _{p} \left( \widetilde \bSigma _{12}  \widetilde \Sigma _{22} ^{-1} \widetilde A _{22}, \widetilde \bSigma _{11\cdot 2} \widetilde A _{22}  \right)
\end{eqnarray*}
with $\widetilde \bSigma _{11\cdot 2} = \widetilde \bSigma _{11} - \widetilde \bSigma _{12} \widetilde \Sigma _{22}^{-1} \widetilde \bSigma _{21}$.

Let $\zeta = \widetilde {A} _{22}\widetilde \Sigma _{22}^{-1} $. Then from Corollary \ref{c1} we get that $\zeta \sim \chi^2_n$, and it is independent of $\mathbf z$. Hence,
\begin{eqnarray*}
\mathbf {MAz} | \zeta, \mathbf z
\sim
\mathcal {N} _{p}
\left(
\zeta \mathbf {M\Sigma z},
\zeta (\mathbf z ^T \bSigma \mathbf z \mathbf {M\Sigma M}^T - \mathbf {M\Sigma z z} ^T \bSigma \mathbf M ^T)
\right),
\end{eqnarray*}
which leads to the stochastic representation of $\mathbf {MAz}$ given by
\begin{equation}\label{th3_eq1}
\mathbf {MAz}\stackrel{d}{=} \zeta \mathbf {M\Sigma z}
+\sqrt{\zeta}(\mathbf z ^T \bSigma \mathbf z \mathbf {M\Sigma M}^T - \mathbf {M\Sigma z z} ^T \bSigma \mathbf M ^T)^{1/2}\mathbf z_0,
\end{equation}
where $\zeta \sim \chi^2_n$, $\mathbf z \sim \mathcal {N}_k(\bmu, \kappa \bSigma)$, and $\mathbf z_0 \sim \mathcal N _p (\mathbf 0, \mathbf I _p) $.
Moreover, $\zeta$, $\mathbf z$, and $\mathbf z_0$ are mutually independent.

Next, we calculate the square root of $(\mathbf z ^T \bSigma \mathbf z \mathbf {M\Sigma M}^T - \mathbf {M\Sigma z z} ^T \bSigma \mathbf M ^T)$ using the following equality
\begin{eqnarray*}
(\mathbf D - \mathbf {bb}^T )^{1/2} = \mathbf D ^{1/2} (\mathbf I_p - c \mathbf D ^{-1/2} \mathbf {bb}^T \mathbf {D} ^{-1/2} )
\end{eqnarray*}
with $c=\frac{1- \sqrt{1-\mathbf b ^T \mathbf D^{-1} \mathbf b}}{\mathbf b ^T \mathbf D^{-1} \mathbf b}$, $\mathbf b = \mathbf M \bSigma \mathbf z$, and $\mathbf D = \mathbf z ^T \bSigma \mathbf z \mathbf {M\Sigma M}^T $ that leads to
\begin{eqnarray*}
\mathbf {MAz}
&\stackrel{d}{=}&
\zeta \mathbf {M\Sigma z}
+
\sqrt{\zeta}
(\mathbf {M\Sigma M}^T)^{1/2}\\
&&\times
\left[
\sqrt{\mathbf z ^T \bSigma \mathbf z} \mathbf I _p
-
\frac {\sqrt{\mathbf z ^T \bSigma \mathbf z} - \sqrt{\mathbf z ^T (\bSigma - \bSigma^{1/2} \mathbf Q \bSigma ^{1/2})\mathbf z}}  {\mathbf z ^T \bSigma^{1/2} \mathbf Q \bSigma^{1/2} \mathbf z}
\mathbf P \bSigma ^{1/2} \mathbf {zz}^T \bSigma ^{1/2} \mathbf P^T
\right]
\mathbf z_0,
\end{eqnarray*}
where $\mathbf P = (\mathbf {M\Sigma M}^T)^{-1/2} \mathbf M \bSigma ^{1/2}$ and $\mathbf Q = \mathbf P^T \mathbf P$.

Finally, making the transformation $\mathbf t = \bSigma ^{1/2} \mathbf z \sim \mathcal {N}_{k} (\bSigma ^{1/2}\bmu, \kappa \bSigma^2)$, we obtain the statement of the theorem.
\end{proof}

%-------------------------------------------------------------------------------------------------------------------
%   Corollary 2
%-------------------------------------------------------------------------------------------------------------------
Next, we consider the special case of Theorem \ref{th3} when $p=1$ and $\mathbf M= \mathbf m^T$.

\begin{corollary}\label{c2}
Let $\mathbf A \sim \mathcal W _{k} (n, \bSigma)$ with $rank (\bSigma)=r < k$ and let $\mathbf z \sim \mathcal N _k (\bmu, \kappa \bSigma)$, $\kappa>0$.
We assume that $\mathbf A$ and $\mathbf z$ are independently distributed. Let $\mathbf m$ be a $k$-dimensional vector of constants such that $\mathbf{m}^T\bSigma \mathbf{m}>0$. Then the stochastic representation of $\mathbf m ^T \mathbf {Az}$ is given by
\begin{equation}\label{c2_sp}
\mathbf m ^T \mathbf {Az}
\stackrel{d}{=}
\zeta \mathbf m^T \bSigma \mathbf z  +
\sqrt{\zeta}
\left[
\mathbf z^T \bSigma \mathbf z \cdot \mathbf m^T \bSigma \mathbf m - (\mathbf m^T \bSigma \mathbf z)^2
\right]^{1/2} z_0,
\end{equation}
where $\zeta \sim \chi^2_n$ and $z_0 \sim \mathcal N (0,1)$; $\zeta$, $z_0$, and $\mathbf z$ are mutually independent.
\end{corollary}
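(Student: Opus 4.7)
The plan is to obtain the corollary as a direct specialization of Theorem \ref{th3} to the case $p = 1$, $\mathbf M = \mathbf m^T$. First I would check the hypotheses: the requirement $\mathbf m^T\bSigma\mathbf m > 0$ immediately yields $\mathbf m^T\bSigma \ne \mathbf 0^T$ and $\mathrm{rank}(\mathbf m^T\bSigma) = 1 \le r$, so the conditioning argument used to derive equation \eqref{th3_eq1} in the proof of Theorem \ref{th3} applies verbatim.

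Rather than plugging into the fully simplified formula of Theorem \ref{th3} with its $\mathbf P$ and $\mathbf Q$, I would simply invoke the intermediate identity \eqref{th3_eq1}, namely
\begin{equation*}
\mathbf{MAz}\stackrel{d}{=} \zeta\, \mathbf M\bSigma\mathbf z + \sqrt{\zeta}\,\bigl(\mathbf z^T\bSigma\mathbf z\,\mathbf M\bSigma\mathbf M^T - \mathbf M\bSigma\mathbf z\mathbf z^T\bSigma\mathbf M^T\bigr)^{1/2}\mathbf z_0,
\end{equation*}
with $\zeta\sim\chi^2_n$, $\mathbf z_0\sim\mathcal N_p(\mathbf 0,\mathbf I_p)$ and $\zeta, \mathbf z_0, \mathbf z$ mutually independent. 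Setting $\mathbf M = \mathbf m^T$ makes every factor in the formula a scalar: $\mathbf M\bSigma\mathbf M^T = \mathbf m^T\bSigma\mathbf m$, $\mathbf M\bSigma\mathbf z = \mathbf m^T\bSigma\mathbf z$, and the outer product $\mathbf M\bSigma\mathbf z\mathbf z^T\bSigma\mathbf M^T$ becomes the square $(\mathbf m^T\bSigma\mathbf z)^2$. The matrix square root collapses to the ordinary scalar square root, and $\mathbf z_0$ reduces to a standard normal $z_0\sim\mathcal N(0,1)$. This produces the stochastic representation \eqref{c2_sp} directly.

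The only point deserving a one-line comment is positivity of the radicand: $\mathbf z^T\bSigma\mathbf z\cdot\mathbf m^T\bSigma\mathbf m - (\mathbf m^T\bSigma\mathbf z)^2 \ge 0$ by the Cauchy–Schwarz inequality applied to the semi-inner product $(\mathbf u,\mathbf v)\mapsto \mathbf u^T\bSigma\mathbf v$ induced by the positive semi-definite matrix $\bSigma$, so the scalar square root is well-defined. Mutual independence of $\zeta$, $z_0$ and $\mathbf z$ transfers immediately from the corresponding property in Theorem \ref{th3}. Since all the genuine work lies in Theorem \ref{th3}, there is no real obstacle here; the corollary is essentially a bookkeeping consequence of the $p=1$ reduction.
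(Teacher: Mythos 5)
Your proposal is correct and coincides with the paper's own argument: the paper likewise obtains the corollary by specializing the intermediate representation \eqref{th3_eq1} from the proof of Theorem \ref{th3} to $p=1$, $\mathbf M=\mathbf m^T$, rather than the final $\mathbf P$, $\mathbf Q$ formula. Your added remark that the radicand is nonnegative by Cauchy--Schwarz for the semi-inner product induced by $\bSigma$ is a small but welcome clarification the paper leaves implicit.
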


The proof of Corollary \ref{c2} follows directly from \eqref{th3_eq1}. The result of the corollary is very useful from the viewpoint of computational statistics. Namely, in order to get a realization of $\mathbf m ^T \mathbf {Az}$ it is sufficient to simulate two random variables from the standard univariate distributions together with a random vector which has a singular multivariate normal distribution. There is no need to generate a large-dimensional object $\mathbf{A}$ and, as a result, the application of \eqref{c2_sp} speeds up the simulations where the product of $\mathbf{A}$ and $\mathbf{z}$ is present.

Another application of Corollary \ref{c2} leads to the expression of the characteristic function of $\mathbf {Az}$ presented in the following theorem.
%%%%%%%%%%%%    Theorem 4     %%%%%%%%%%%%%%%%%%%%%%%%%%%%
\begin{theorem}
\label{th4}
Let $\mathbf A \sim \mathcal W _{k} (n, \bSigma)$ with $rank (\bSigma)=r < k$ and let $\mathbf z \sim \mathcal N _k (\bmu, \kappa \bSigma)$.
We assume that $\mathbf A$ and $\mathbf z$ are independently distributed.
Then the characteristic function of $\mathbf {Az}$ is given by
\begin{eqnarray*}
\varphi_{\mathbf {Az}} ( \mathbf u)&=&
\frac{ \exp \left( -\frac{\kappa^{-1}}{2} \bmu^T \mathbf R\mathbf \Lambda^{-1} \mathbf R^T\bmu\right)}{\kappa^{r/2} |\mathbf \Lambda|^{1/2}}
\int_{0}^{\infty} |\bOmega(\zeta)|^{-1/2}  f_{\chi^2_n} (\zeta)\\
&\times&\exp \left(
i \zeta \bnu^T \mathbf \Lambda \mathbf R^T \mathbf u - \frac{\zeta^2}{2} \mathbf u^T  \mathbf R \mathbf \Lambda \bOmega(\zeta)^{-1} \mathbf \Lambda \mathbf R^T \mathbf u + \frac{1}{2} \bnu^T \bOmega(\zeta) \bnu\right)
  \mbox{d} \zeta,
\end{eqnarray*}
where $\bnu  =  \kappa^{-1} \bOmega(\zeta)^{-1}  \mathbf \Lambda^{-1} \mathbf R^T \bmu$,
\begin{eqnarray*}
 \bOmega (\zeta)
 &=&\kappa^{-1} \mathbf \Lambda^{-1}+ \zeta \left[\mathbf \Lambda \cdot \mathbf u^T \mathbf \Sigma \mathbf u -\mathbf \Lambda \mathbf R^T \mathbf u \mathbf u^T \mathbf R \mathbf \Lambda
 \right],
\end{eqnarray*}
and
$\bSigma=\mathbf R\mathbf \Lambda \mathbf R^T$ is the singular value decomposition of $\bSigma$ with diagonal matrix $\mathbf \Lambda$ consisting of all $r$ non-zero eigenvalues of $\bSigma$ and $\mathbf R$ the $k \times r$ matrix of the corresponding eigenvectors; $f_{\chi^2_n}$ denotes the density function of the $\chi^2$ distribution with $n$ degrees of freedom.
\end{theorem}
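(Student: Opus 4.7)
The plan is to reduce the characteristic function to a one-dimensional integral over $\zeta$ by first eliminating $z_0$, and then carrying out a Gaussian integration that handles the singular part of $\mathbf{z}$. Setting $\mathbf{m}=\mathbf{u}$ in Corollary \ref{c2} gives the stochastic representation
\[
\mathbf{u}^T\mathbf{Az} \stackrel{d}{=} \zeta\,\mathbf{u}^T\bSigma\mathbf{z} + \sqrt{\zeta}\bigl[\mathbf{z}^T\bSigma\mathbf{z}\cdot\mathbf{u}^T\bSigma\mathbf{u} - (\mathbf{u}^T\bSigma\mathbf{z})^2\bigr]^{1/2} z_0,
\]
with $\zeta\sim\chi^2_n$, $z_0\sim\mathcal{N}(0,1)$, $\mathbf{z}\sim\mathcal{N}_k(\bmu,\kappa\bSigma)$ mutually independent. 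Conditioning on $(\zeta,\mathbf{z})$ and using the characteristic function of $\mathcal{N}(0,1)$ removes $z_0$, leaving
\[
\varphi_{\mathbf{Az}}(\mathbf{u}) = E\Bigl[\exp\!\Bigl(i\zeta\,\mathbf{u}^T\bSigma\mathbf{z} - \tfrac{\zeta}{2}\bigl(\mathbf{z}^T\bSigma\mathbf{z}\cdot\mathbf{u}^T\bSigma\mathbf{u} - (\mathbf{u}^T\bSigma\mathbf{z})^2\bigr)\Bigr)\Bigr],
\]
where the outer expectation is over $(\zeta,\mathbf{z})$.

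To deal with the singular normal $\mathbf{z}$ I would reparametrize through the spectral decomposition $\bSigma=\mathbf{R}\mathbf{\Lambda}\mathbf{R}^T$ with $\mathbf{R}^T\mathbf{R}=\mathbf{I}_r$, writing $\mathbf{z}=\bmu+\mathbf{R}\mathbf{v}$ where $\mathbf{v}\sim\mathcal{N}_r(\mathbf{0},\kappa\mathbf{\Lambda})$ has a proper density $(2\pi\kappa)^{-r/2}|\mathbf{\Lambda}|^{-1/2}\exp(-\tfrac{1}{2\kappa}\mathbf{v}^T\mathbf{\Lambda}^{-1}\mathbf{v})$. Using $\mathbf{R}^T\mathbf{R}=\mathbf{I}_r$ repeatedly, $\mathbf{u}^T\bSigma\mathbf{z}$ becomes $\mathbf{u}^T\mathbf{R}\mathbf{\Lambda}(\mathbf{R}^T\bmu+\mathbf{v})$ and the bracket rewrites as
\[
(\mathbf{R}^T\bmu+\mathbf{v})^T\bigl[(\mathbf{u}^T\bSigma\mathbf{u})\mathbf{\Lambda}-\mathbf{\Lambda}\mathbf{R}^T\mathbf{u}\mathbf{u}^T\mathbf{R}\mathbf{\Lambda}\bigr](\mathbf{R}^T\bmu+\mathbf{v}).
\]
Combining with the density, all quadratic contributions in $\mathbf{v}$ collect into $-\tfrac{1}{2}\mathbf{v}^T\bOmega(\zeta)\mathbf{v}$, which identifies the Hessian of the exponent with $\bOmega(\zeta)$ as defined in the statement.

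Completing the square in $\mathbf{v}$ and performing the resulting Gaussian integration yields a factor $|\bOmega(\zeta)|^{-1/2}$, merges the $(2\pi\kappa)^{-r/2}|\mathbf{\Lambda}|^{-1/2}$ normalization into $\kappa^{-r/2}|\mathbf{\Lambda}|^{-1/2}$, and produces four $\mathbf{v}$-free exponent pieces: the $\bmu$-only term $-\tfrac{\kappa^{-1}}{2}\bmu^T\mathbf{R}\mathbf{\Lambda}^{-1}\mathbf{R}^T\bmu$ (pulled out as the overall prefactor), the imaginary cross term $i\zeta\bnu^T\mathbf{\Lambda}\mathbf{R}^T\mathbf{u}$, the real quadratic $-\tfrac{\zeta^2}{2}\mathbf{u}^T\mathbf{R}\mathbf{\Lambda}\bOmega(\zeta)^{-1}\mathbf{\Lambda}\mathbf{R}^T\mathbf{u}$ arising from squaring the imaginary linear contribution, and the quadratic $\tfrac{1}{2}\bnu^T\bOmega(\zeta)\bnu$ from the completed square. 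The vector $\bnu=\kappa^{-1}\bOmega(\zeta)^{-1}\mathbf{\Lambda}^{-1}\mathbf{R}^T\bmu$ emerges naturally as $\bOmega(\zeta)^{-1}$ acting on the linear coefficient coming from the density. Integrating the resulting $\zeta$-conditional expression against $f_{\chi^2_n}(\zeta)$ delivers the stated formula.

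The hardest part is the algebraic bookkeeping in this final step: three independent sources of terms in $\mathbf{v}$ (the Gaussian density, the conditional variance from $z_0$, and the imaginary linear term $i\zeta\mathbf{u}^T\bSigma\mathbf{z}$) must be combined and the completed-square residual rearranged into the compact form involving $\bnu$ quoted in the theorem. Positive-definiteness of $\bOmega(\zeta)$ for $\zeta>0$, which is required to justify the Gaussian integration, is not a real obstacle since $\kappa^{-1}\mathbf{\Lambda}^{-1}\succ 0$ and $(\mathbf{u}^T\bSigma\mathbf{u})\mathbf{\Lambda}-\mathbf{\Lambda}\mathbf{R}^T\mathbf{u}\mathbf{u}^T\mathbf{R}\mathbf{\Lambda}\succeq 0$ by a Cauchy--Schwarz inequality in the inner product induced by $\mathbf{\Lambda}$.
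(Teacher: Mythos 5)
Your proposal is correct and follows essentially the same route as the paper's proof: specialize Corollary \ref{c2} to $\mathbf{m}=\mathbf{u}$, integrate out $z_0$ via the Gaussian characteristic function, pass to an $r$-dimensional nonsingular Gaussian through the spectral decomposition $\bSigma=\mathbf{R}\mathbf{\Lambda}\mathbf{R}^T$ (your shift $\mathbf{z}=\bmu+\mathbf{R}\mathbf{v}$ versus the paper's $\mathbf{y}=\mathbf{R}^T\mathbf{z}$ is an immaterial reparametrization), complete the square to identify $\bOmega(\zeta)$ and $\bnu$, and finally integrate against the $\chi^2_n$ density. Your added justification that $\bOmega(\zeta)$ is positive definite via Cauchy--Schwarz in the $\mathbf{\Lambda}$-inner product, which the paper leaves implicit, is a welcome extra.
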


%-------------------------------------------------------------------------------------------------------------------
%    Proof of Theorem 4
%-------------------------------------------------------------------------------------------------------------------
\begin{proof}
From the stochastic representation derived in Corollary \ref{c2}, we get that
\begin{eqnarray*}
\varphi_{\mathbf {Az}} ( \mathbf u)&=&\mathbb{E}\left(\exp\left(i \mathbf {u}^T\mathbf{Az}\right)\right)
=\mathbb{E}\left(\exp\left( i \zeta \mathbf u^T \bSigma \mathbf z  +
i \sqrt{\zeta} \left[\mathbf z^T \bSigma \mathbf z \cdot \mathbf u^T \bSigma \mathbf u - (\mathbf u^T \bSigma \mathbf z)^2
\right]^{1/2} z_0\right)\right)\\
&=&\mathbb{E}\left(\exp\left(i \zeta \mathbf u^T \bSigma \mathbf z\right)
\mathbb{E}\left(\exp\left( i \sqrt{\zeta} \left[\mathbf z^T \bSigma \mathbf z \cdot \mathbf u^T \bSigma \mathbf u - (\mathbf u^T \bSigma \mathbf z)^2
\right]^{1/2} z_0\right)|\zeta,\mathbf{z}\right)\right)\\
&=&\mathbb{E}\left(\exp\left( i \zeta \mathbf u^T \bSigma \mathbf z\right)
\exp\left( - \frac{1}{2}\zeta \left[\mathbf z^T \bSigma \mathbf z \cdot \mathbf u^T \bSigma \mathbf u - (\mathbf u^T \bSigma \mathbf z)^2
\right]\right)\right)\\
&=&\mathbb{E}\left(\mathbb{E}\left(\exp\left( i \zeta \mathbf u^T \bSigma \mathbf z\right)
\exp\left(-\frac{1}{2}\zeta \left[\mathbf z^T \bSigma \mathbf z \cdot \mathbf u^T \bSigma \mathbf u - (\mathbf u^T \bSigma \mathbf z)^2
\right]\right)|\zeta\right)\right)\\
&=&\mathbb{E}\left(\mathbb{E}\left(\exp\left( i \zeta \mathbf v^T \mathbf \Lambda \mathbf y\right)
\exp\left( - \frac{1}{2}\zeta \left[\mathbf y^T \mathbf \Lambda \mathbf y \cdot \mathbf v^T \mathbf \Lambda \mathbf v - (\mathbf v^T \mathbf \Lambda \mathbf y)^2
\right]\right)|\zeta\right)\right)
\end{eqnarray*}
where $\mathbf v= \mathbf R ^T \mathbf u$; $\bSigma=\mathbf R\mathbf \Lambda\mathbf R^T$ is the singular value decomposition of $\bSigma$; $\mathbf y=\mathbf R^T\mathbf z \sim \mathcal{N}_r(\mathbf R^T \bmu, \kappa \mathbf \Lambda)$ has a non-singular multivariate normal distribution.

Hence,
\begin{eqnarray*}
&&\mathbb{E}\left(\exp\left( i \zeta \mathbf v^T \mathbf \Lambda \mathbf y\right)
\exp\left( - \frac{1}{2}\zeta \left[\mathbf y^T \mathbf \Lambda \mathbf y \cdot \mathbf v^T \mathbf \Lambda \mathbf v - (\mathbf v^T \mathbf \Lambda \mathbf y)^2
\right]\right)|\zeta\right)\\
&=&\frac{1}{(2\pi \kappa)^{r/2}|\mathbf \Lambda|^{1/2}}\int_{\mathbb{R}^r} \exp\left( i \zeta \mathbf v^T \mathbf \Lambda \mathbf y\right)
\exp\left( - \frac{1}{2}\zeta \left[\mathbf y^T \mathbf \Lambda \mathbf y \cdot \mathbf v^T \mathbf \Lambda \mathbf v - (\mathbf v^T \mathbf \Lambda \mathbf y)^2
\right]\right)\\
&\times&\exp\left(-\frac{\kappa^{-1}}{2}(\mathbf{y}-\mathbf R^T\bmu)^T \mathbf \Lambda^{-1} (\mathbf{y}-\mathbf R^T\bmu)\right) \mbox{d} \mathbf{y}
\end{eqnarray*}
where
\begin{eqnarray*}
 && \kappa^{-1}(\mathbf{y}-\mathbf R^T\bmu)^T \mathbf \Lambda^{-1} (\mathbf{y}-\mathbf R^T\bmu)+\zeta \left[\mathbf y^T \mathbf \Lambda \mathbf y \cdot \mathbf v^T \mathbf \Lambda \mathbf v - (\mathbf v^T \mathbf \Lambda \mathbf y)^2
 \right]\\
 &&=  (\mathbf{y}-\bnu)^T \bOmega(\zeta) (\mathbf{y}-\bnu) + d
\end{eqnarray*}
with
\begin{eqnarray*}
 \bOmega(\zeta) &=&\kappa^{-1} \mathbf \Lambda^{-1}+ \zeta \left[\mathbf \Lambda \cdot \mathbf v^T \mathbf \Lambda \mathbf v -\mathbf \Lambda \mathbf v\mathbf v^T \mathbf \Lambda \right] ,\\
 \bnu &=&   \kappa^{-1} \bOmega(\zeta)^{-1}  \mathbf \Lambda^{-1} \mathbf R^T \bmu, \\
 d &=&\kappa^{-1} \bmu^T \mathbf{R} \mathbf \Lambda^{-1} \mathbf R^T \bmu - \bnu^T \bOmega(\zeta) \bnu=
 \kappa^{-1} \bmu^T \bSigma^+ \bmu - \bnu^T \bOmega(\zeta) \bnu.
\end{eqnarray*}

As a result, we get
\begin{eqnarray*}
\varphi_{\mathbf {Az}} ( \mathbf u)&=&
\frac{ \exp \left( -\frac{\kappa^{-1}}{2} \bmu^T \bSigma^+ \bmu\right)}{\kappa^{r/2} |\mathbf \Lambda|^{1/2}}
\int_{0}^{\infty}|\bOmega(\zeta)|^{-1/2}  f_{\chi^2_n} (\zeta)
\\
&\times&\exp \left(i \zeta \bnu^T \mathbf \Lambda \mathbf v - \frac{\zeta^2}{2} \mathbf v^T \mathbf \Lambda \bOmega(\zeta)^{-1} \mathbf \Lambda \mathbf v + \frac{1}{2} \bnu^T \bOmega(\zeta) \bnu\right)
  \mbox{d} \zeta.
\end{eqnarray*}

This completes the proof of the theorem.
\end{proof}

\subsection{Asymptotic distribution under double asymptotic regime}

In this section we derive the asymptotic distribution of $\mathbf {MAz} $ under double asymptotic regime, i.e. when both $r$ and $n$ tend to infinity such that $r/n \to c \in [0,+\infty)$. In the derivation of the asymptotic distribution we rely on the results of Corollary \ref{c2}.

The following conditions are needed for ensuring the validity of the asymptotic results presented in this section
\begin{description}
\item[(A1)] Let $(\lambda_i,\bm u_i)$ denote the set of non-zero eigenvalues and eigenvectors of $\bm\Sigma$.
We assume that there exist $l_1$ and $L_1$ such that
\begin{eqnarray*}
0<l_1\leq\lambda_1\leq\lambda_2\leq\ldots\leq\lambda_r\leq L_1<\infty
\end{eqnarray*}
uniformly on $k$.
\item[(A2)] There exists $L_2$ such that
\begin{eqnarray*}
|\bm u_i^T\bmu|\leq L_2 \textrm{ for all } i=1,\ldots,r \textrm{ uniformly on } k.
\end{eqnarray*}
\end{description}

%%%%%%%%%%%%    Theorem 5     %%%%%%%%%%%%%%%%%%%%%%%%%%%%
\begin{theorem}\label{th5}
Let $\mathbf A \sim \mathcal W _{k} (n, \bSigma)$ with $rank (\bSigma)=r < p$ and let $\mathbf z \sim \mathcal N _k (\bmu, \kappa \bSigma)$,$\kappa>0$. Assume $\frac{r}{n}=c+o(n^{-1/2}), c\in [0,+\infty)$ and $\kappa r =O(1)$ as $n\rightarrow \infty$. Also, let $\mathbf m$ be a $k$-dimensional vector of constants such that $\mathbf m^T \bSigma\mathbf m>0$ and $|\bm u_i^T\mathbf m|\leq L_2$ for all $i=1,\ldots,r$ uniformly on $k$. Assume that $\mathbf A$ and $\mathbf z$ are independently distributed. Then, under (A1) and (A2), it holds that the asymptotic distribution of $\mathbf m^T\mathbf{Az}$ is given by
\[\sqrt{n}\sigma^{-1}\left(\frac{1}{n}\mathbf m ^T \mathbf {Az}-\mathbf m ^T \bSigma\bmu\right) \stackrel{d}{\longrightarrow}\mathcal N \left(0,1\right), \]
where
\begin{eqnarray*}
\sigma^2&=& \left(\mathbf m^T \bSigma\bmu\right)^2+\mathbf m^T\bSigma\mathbf{m}
\left[\kappa tr(\bSigma^2)+\bmu^T \bSigma\bmu\right]+\frac{\kappa}{c}\mathbf m^T\bSigma^3\mathbf{m}.
\end{eqnarray*}
\end{theorem}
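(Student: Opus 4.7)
The plan is to exploit the stochastic representation of Corollary~\ref{c2},
\begin{equation*}
\mathbf m^T\mathbf{Az}\stackrel{d}{=}\zeta U+\sqrt{\zeta}\,V^{1/2}z_0,
\end{equation*}
where $U=\mathbf m^T\bSigma\mathbf z$, $V=(\mathbf z^T\bSigma\mathbf z)(\mathbf m^T\bSigma\mathbf m)-U^2$, and $\zeta\sim\chi^2_n$, $\mathbf z\sim\mathcal N_k(\bmu,\kappa\bSigma)$, $z_0\sim\mathcal N(0,1)$ are mutually independent. The crucial observation is that, despite $\mathbf z$ being singular, $U$ is a bona fide scalar Gaussian $\mathcal N(\mathbf m^T\bSigma\bmu,\kappa\mathbf m^T\bSigma^3\mathbf m)$. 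Thus there are three independent (asymptotically) Gaussian sources, $\zeta$, $U$, and $z_0$, to combine additively.

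After centring by $n\mathbf m^T\bSigma\bmu=\mathbb E[\mathbf m^T\mathbf{Az}]$ and dividing by $\sqrt n$, I would decompose
\begin{equation*}
\frac{\mathbf m^T\mathbf{Az}-n\mathbf m^T\bSigma\bmu}{\sqrt n}=\frac{(\zeta-n)\mathbf m^T\bSigma\bmu}{\sqrt n}+\sqrt n(U-\mathbf m^T\bSigma\bmu)+\frac{(\zeta-n)(U-\mathbf m^T\bSigma\bmu)}{\sqrt n}+\sqrt{\zeta/n}\,V^{1/2}z_0,
\end{equation*}
and analyse the four pieces separately: (i) the classical CLT $(\zeta-n)/\sqrt{2n}\to\mathcal N(0,1)$ handles the first; (ii) exact Gaussianity of $U$ delivers the second as $\mathcal N(0,n\kappa\mathbf m^T\bSigma^3\mathbf m)$; (iii) the cross term, having variance $2\kappa\mathbf m^T\bSigma^3\mathbf m$, is shown to vanish in the limit under (A1)--(A2) and $\kappa r=O(1)$; (iv) a Chebyshev argument (after computing $\mathrm{Var}(V)$ via moments of quadratic forms in the singular Gaussian $\mathbf z$) yields $V\to\mathbb E[V]=\mathbf m^T\bSigma\mathbf m\{\kappa\mathrm{tr}(\bSigma^2)+\bmu^T\bSigma\bmu\}-\kappa\mathbf m^T\bSigma^3\mathbf m-(\mathbf m^T\bSigma\bmu)^2$ in probability, so, together with $\sqrt{\zeta/n}\to 1$, the last piece tends to $\mathcal N(0,\mathbb E[V])$.

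By mutual independence of $\zeta$, $\mathbf z$ and $z_0$, the three non-vanishing asymptotic Gaussians are jointly independent, and their sum is Gaussian with variance equal to the sum of individual variances. Using the rate $r/n=c+o(n^{-1/2})$ to identify $n\kappa\mathbf m^T\bSigma^3\mathbf m$ with $(\kappa r/c)\mathbf m^T\bSigma^3\mathbf m$ to leading order, the resulting variance matches the claimed $\sigma^2$, and Slutsky's lemma finishes the standardisation. The main obstacle is the concentration of the quadratic form $V$: bounding $\mathrm{Var}(V)$ by a quantity of strictly smaller order than $(\mathbb E[V])^2$ via Isserlis-type identities for Gaussians, using (A1) to control traces of powers of $\bSigma$ and (A2) together with the assumption on $\mathbf m$ to control bilinear forms. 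A related difficulty is verifying that the mixed term in (iii) is truly $o_p(1)$, which is essential since otherwise the product of two centred Gaussians would destroy the Gaussianity of the limit.
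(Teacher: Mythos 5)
Your proposal is correct and arrives at the right limit, but it routes around the paper's main technical device. Both arguments start from the stochastic representation of Corollary \ref{c2}. The paper then establishes a joint CLT for the four-vector $\left(\zeta/n,\ \mathbf z^T\bSigma\mathbf z,\ \mathbf m^T\bSigma\mathbf z,\ z_0/\sqrt n\right)$ via the Cram\'er--Wold device, a representation of the quadratic form as a weighted sum of independent non-central $\chi^2_1$ variables, and a verification of the Lindeberg condition, and only then applies the delta method. You instead decompose the centred statistic into four explicit pieces and exploit that $\mathbf m^T\bSigma\mathbf z$ is exactly Gaussian (so no CLT is needed for the linear form) and that $\mathbf z^T\bSigma\mathbf z$ enters only through $V^{1/2}z_0$, so a weak law suffices for it. This is legitimate precisely because the delta-method gradient in the $\mathbf z^T\bSigma\mathbf z$ direction vanishes at the limit point (the second entry of the paper's gradient vector is $0$); your decomposition makes that explicit and eliminates the Lindeberg verification, which is the bulk of the paper's proof. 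What the heavier machinery buys the paper is the joint normality statement \eqref{th5-Eq5}, which is reused for Theorem \ref{th6}. Three places where your sketch needs care: (1) terms (ii) and (iv) both depend on $\mathbf z$, so their asymptotic independence is not immediate from the independence of $\zeta$, $\mathbf z$, $z_0$; condition on $(\zeta,\mathbf z)$, note that term (iv) is then exactly $\mathcal N\left(0,(\zeta/n)V\right)$ with $(\zeta/n)V/\mathbb E[V]\stackrel{p}{\longrightarrow}1$, and pass to the joint limit through conditional characteristic functions. (2) The cross term (iii) has variance $2\kappa\,\mathbf m^T\bSigma^3\mathbf m=O(\kappa r)=O(1)$, which need not tend to zero in absolute terms; it is negligible only after division by $\sigma$, being $2/n$ times the variance of term (ii). (3) The concentration $\mathrm{Var}(V)=o\left((\mathbb E[V])^2\right)$ requires the lower bound $tr(\bSigma^2)\ge r\,l_1^2$ from (A1) so that $\mathbb E[V]$ does not degenerate when $\bmu=\mathbf 0$. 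Finally, your computation gives the third variance contribution as $n\kappa\,\mathbf m^T\bSigma^3\mathbf m\approx(\kappa r/c)\,\mathbf m^T\bSigma^3\mathbf m$, which is what is consistent with $\mathrm{Var}(\mathbf m^T\bSigma\mathbf z)=\kappa\,\mathbf m^T\bSigma^3\mathbf m$, whereas the theorem writes $(\kappa/c)\,\mathbf m^T\bSigma^3\mathbf m$; this factor-of-$r$ discrepancy originates in the paper's normalisation of \eqref{th5-Eq5}, not in your argument.
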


%-------------------------------------------------------------------------------------------------------------------
%    Proof of Theorem 5
%-------------------------------------------------------------------------------------------------------------------
\begin{proof}
From Corollary \ref{c2}, the stochastic representation of $\mathbf m ^T \mathbf {Az}$ is given by
\begin{equation*}
\mathbf m ^T \mathbf {Az}
\stackrel{d}{=}
\zeta \mathbf m^T \bSigma \mathbf z  +
\sqrt{\zeta}
\left[
\mathbf z^T \bSigma \mathbf z \cdot \mathbf m^T \bSigma \mathbf m - (\mathbf m^T \bSigma \mathbf z)^2
\right]^{1/2} z_0,
\end{equation*}
with $\zeta \sim \chi^2_n$, $z_0 \sim \mathcal N (0,1)$ and $\mathbf{z}\sim\mathcal N _k (\bmu, \kappa \bSigma)$,$\kappa>0$; $\zeta$, $z_0$, and $\mathbf z$ are mutually independent.

From the property of $\chi^2$-distribution, we immediately obtain the asymptotic distribution of $\zeta$ given by
\begin{equation}\label{th5-Eq1}
\sqrt{n}\left(\frac{\zeta}{n}-1\right)\stackrel{d}{\sim}\mathcal N (0,2) \textrm{ as } n\rightarrow \infty
\end{equation}
Further, it holds that $\sqrt{n}(z_0/\sqrt{n})\sim \mathcal N (0,1)$ for all $n$, consequently it is its asymptotic distribution.

We next show that $\mathbf m ^T \mathbf {\bSigma z}$ and $\mathbf z ^T \mathbf {\bSigma z}$ are jointly asymptotically normally distributed under the high-dimensional asymptotic regime. For any $a_1\in \mathds{R}$ and $a_2 \in \mathds{R}$, we consider
\begin{equation*}
a_1\mathbf z ^T \mathbf {\bSigma z}+2a_2\mathbf m ^T \mathbf {\bSigma z}=
a_1\left(\mathbf z +\frac{a_2}{a_1}\mathbf m\right)^T\mathbf {\bSigma}\left(\mathbf z +\frac{a_2}{a_1}\mathbf m\right)
-\frac{a_2^2}{a_1}\mathbf m ^T \mathbf {\bSigma m}
=a_1\tilde{\mathbf z}^T \bSigma\tilde{\mathbf z}-\frac{a_2^2}{a_1}\mathbf m ^T \mathbf {\bSigma m}
\end{equation*}
where $\tilde{\mathbf z}\sim \mathcal N _k (\bmu_a, \kappa \bSigma)$ with $\bmu_a=\bmu+\frac{a_2}{a_1}\mathbf m$.
By \cite{provost1996exact} the random variable $\tilde{\mathbf z}^T \bSigma\tilde{\mathbf z}$ can be expressed as
\begin{equation*}
\tilde{\mathbf z}^T \bSigma\tilde{\mathbf z}
\stackrel{d}{=}
\kappa\sum_{i=1}^r\lambda_i^2\zeta_i \quad \textrm{with} \quad \zeta_i \stackrel{d}{\sim} \chi_1^2(\delta_i^2), ~\delta_i^2=\kappa^{-1}\lambda_i^{-1} \left(\bm u_i^T\bmu_a\right)^2,
\end{equation*}
where the symbol $\chi^2_d(\delta)$ denotes the non-central chi-squared distribution with $d$ degrees of freedom and non-centrality parameter $\delta$.

Next, we apply the Linderberg central limit theorem to the i.i.d random variables $V_i=\kappa\lambda_i^2\zeta_i$.
Let $\sigma_i^2=\mathbb{V}(V_i)$ and $s_n^2=\mathbb{V}(\sum_{i=1}^r V_i)$. It holds that
\begin{eqnarray*}
s_n^2&=&\mathbb{V}\left(\sum_{i=1}^rV_i\right)=\kappa^2 \sum_{i=1}^r\lambda_i^4\mathbb{V}(\zeta_i)=\kappa^2\sum_{i=1}^r\lambda_i^4 2(1+2\delta_i^2)
\\
&=&\kappa^2 \sum_{i=1}^r \left(2\lambda_i^4+4\kappa^{-1}\lambda_i^3(\bm u_i^T\bmu_a)^2\right)
=\kappa^2\left[2tr(\bSigma^4)+4\kappa^{-1}\bmu_a^T \bSigma^3\bmu_a\right]\,.
\end{eqnarray*}

In order to verify the Linderberg's condition, we need to check if for any small $\epsilon>0$ it holds that
\begin{equation}
\lim_{r\rightarrow\infty} \frac{1}{s_n^2} \sum_{i=1}^r \mathbb{E}\left[(V_i-\mathbb{E}(V_i))^2\mathds{1}_{\{|V_i-\mathbb{E}(V_i)|>\epsilon s_n\}}\right]= 0,
\end{equation}
where
\begin{eqnarray*}
&&\sum_{i=1}^r \mathbb{E}\left[(V_i-\mathbb{E}(V_i))^2\mathds{1}_{\{|V_i-\mathbb{E}(V_i)|>\epsilon s_n\}}\right]\\
&\stackrel{Cauchy-Schwarz}{\leq}&\sum_{i=1}^r\sqrt{\mathbb{E}\left[(V_i-\mathbb{E}(V_i))^4\right]} \sqrt{\mathbb{E}\left[\mathds{1}_{\{|V_i-\mathbb{E}(V_i)|>\epsilon\sigma_n\}}\right]}\\
&=&\sum_{i=1}^r\sqrt{\mathbb{E}\left[(V_i-\mathbb{E}(V_i))^4\right]} \sqrt{\mathbb{P}\left[|V_i-\mathbb{E}(V_i)|>\epsilon\sigma_n\right]}\\
&\stackrel{Chebychev}{\leq}& \sum_{i=1}^r \sqrt{\mathbb{E}\left[(V_i-\mathbb{E}(V_i))^4\right]}\frac{\sigma_i}{\epsilon s_n}\\
&=&2\sqrt{3}\frac{\kappa^2}{\epsilon}\sum_{i=1}^r \lambda_i^4 \sqrt{(1+2\delta_i^2)^2+4(1+4\delta_i^2)}\frac{\sigma_i}{s_n}\,.
\end{eqnarray*}

In using
\[(1+2\delta_i^2)^2+4(1+4\delta_i^2)=(5+2\delta_i^2)^2-20\le(5+2\delta_i^2)^2\]
we get with $\sigma_{\max}=\sup_{i} \sigma_i$ the following inequality
\begin{eqnarray*}
&&\frac{1}{s_n^2}\sum_{i=1}^r \mathbb{E}\left[(V_i-\mathbb{E}(V_i))^2\mathds{1}_{\{|V_i-\mathbb{E}(V_i)|>\epsilon s_n\}}\right]\le 2\sqrt{3}\frac{\kappa^2}{\epsilon}\frac{\sigma_{\max}}{s_n}\frac{1}{s_n^2}\sum_{i=1}^r \lambda_i^4 (5+2\delta_i^2)\\
&=&\frac{\sqrt{3}}{\epsilon}\frac{\sigma_{\max}}{s_n}\frac{5 tr(\bSigma^4)+2\kappa^{-1}\bmu_a^T\bSigma^3\bmu_a}{tr(\bSigma^4)+2\kappa^{-1}\bmu_a^T\bSigma^3\bmu_a}
\le\frac{5\sqrt{3}}{\epsilon}\frac{\sigma_{\max}}{s_n}\,.
\end{eqnarray*}

Using
\begin{eqnarray*}
(\bm u_i^T\bmu_a)^2&=&\left(\bm u_i^T\bmu+\frac{a_2}{a_1} \bm u_i^T\bm m\right)^2
=2(\bm u_i^T\bmu)^2+2\left(\frac{a_2}{a_1}\bm u_i^T\bm m\right)^2\\
&=&2L_2^2\left( 1+\left(\frac{a_2}{a_1}\right)^2\right)<\infty
\end{eqnarray*}
and Assumptions (A1) and (A2), we get
\begin{eqnarray*}
\frac{\sigma_{\max}^2}{\sigma_n^2}&=&\frac{\sup_i(\lambda_i^4 (1+2\delta_i^2))}{tr(\bSigma^4)+2\kappa^{-1}\bmu_a^T\bSigma^3\bmu_a}=\frac{\sup_i(\lambda_i^4 + 2\kappa^{-1}\lambda_i^3 (\bm u_i^T\bmu_a)^2)}{tr(\bSigma^4)+2\kappa^{-1}\bmu_a^T\bSigma^3\bmu_a}\longrightarrow 0
\end{eqnarray*}
which verifies the Linderberg condition.

Since
\begin{eqnarray*}
\sum_{i=1}^r\mathbb{E}(V_i)&=&\kappa\sum_{i=1}^r\lambda_i^2\mathbb{E}\left(\zeta_i\right)=\kappa\sum_{i=1}^r\lambda_i^2\left(1+\delta_i^2\right)
=\kappa tr(\bSigma^2)+\bmu_a^T\bSigma\bmu_a
\end{eqnarray*}
we obtain by using the Linderberg's central limit theorem that
\begin{eqnarray*}
\sqrt{\frac{1}{\kappa}}\frac{\tilde{\mathbf z}^T \bSigma\tilde{\mathbf z}- \kappa  tr(\bSigma^2)-\bmu_a^T\bSigma\bmu_a}{\sqrt{2\kappa tr(\bSigma^4)+4\bmu_a^T\bSigma^3\bmu_a}}
&\stackrel{d}{\longrightarrow} \mathcal N (0,1)
\end{eqnarray*}

Let $\mathbf a=(a_1,2a_2)^T$. Then the last identity leads to
\begin{align}\label{th5-Eq5}
&\sqrt{n}\left[
\begin{array}{ccc}
 \mathbf a^T\left(\begin{array}{c}
 \mathbf z^T \bSigma\mathbf z\\
  \mathbf m^T \bSigma\mathbf z
  \end{array}\right)&
  -&\mathbf a^T\left(
  \begin{array}{c}
  \kappa tr(\bSigma^2)+ \bmu^T \bSigma\bmu\\
  \mathbf m^T \bSigma\bmu
  \end{array}
  \right)
\end{array}
\right]\nonumber\\
&\stackrel{d}{\longrightarrow}
\mathcal N\left(
0,\mathbf a^T \frac{\kappa}{c}\left(\begin{array}{cc}
2\kappa tr(\bSigma^4)+4 \bmu\bSigma^3\bmu&
2\mathbf m^T\bSigma^3\bmu
\\
2\mathbf m^T\bSigma^3\bmu&
\mathbf m^T\bSigma^3\mathbf m
\end{array}
\right) \mathbf a\right)
\end{align}
which implies that the vector
$\left( \mathbf z^T \bSigma\mathbf z,\mathbf m^T \bSigma\mathbf z\right)^T$ is asymptotically multivariate normally distributed because $\mathbf a$ is an arbitrary fixed vector.

Taking into account to \eqref{th5-Eq1},\eqref{th5-Eq5} and the fact that $\zeta, z_0$, and $\mathbf z$ are mutually independent we get the following asymptotic result
\begin{eqnarray*}
&&\sqrt{n}\left[
\begin{array}{ccc}
\left(\begin{array}{c}
\frac{\zeta}{n}\\
 \mathbf z^T \bSigma\mathbf z\\
  \mathbf m^T \bSigma\mathbf z\\
  \frac{z_0}{\sqrt{n}}
  \end{array}\right)
  &-&
  \left(
  \begin{array}{c}
  1\\
  \kappa tr(\bSigma^2)+\bmu^T \bSigma\bmu\\
  \mathbf m^T \bSigma\bmu\\
  0
  \end{array}
  \right)
\end{array}
\right]\\
&&\stackrel{d}{\longrightarrow}\mathcal N
\left(
0,\left(\begin{array}{cccc}
2& 0& 0& 0\\
0& 2\frac{\kappa^2}{c} tr(\bSigma^4)+4\frac{\kappa}{c}\bmu^T\bSigma^3\bmu & 2\frac{\kappa}{c}\mathbf m^T\bSigma^3\bmu& 0\\
0& 2\frac{\kappa}{c}\mathbf m^T\bSigma^3\bmu & \frac{\kappa}{c}\mathbf m^T\bSigma^3\mathbf m& 0\\
0&0&0&1\\
\end{array}
\right)\right)
\end{eqnarray*}

Finally, the application of the delta method leads to
\[\sqrt{n}\sigma^{-1}\left(\frac{1}{n}\mathbf m ^T \mathbf {Az}- \mathbf m ^T \bSigma\bmu\right) \stackrel{d}{\longrightarrow}\mathcal N \left(0,1\right), \]
where
{
\begin{eqnarray*}
\sigma^2&=&
\left(
\begin{array}{cccc}
\mathbf m^T \bSigma\bmu&0&1&\left[\left[\kappa tr(\bSigma^2)+\bmu^T \bSigma\bmu\right]\mathbf m^T\bSigma\mathbf{m}-\left(\mathbf m^T\bSigma\bmu\right)^2\right]^{\frac{1}{2}}\\
\end{array}
\right)
\\
&\times&\left(\begin{array}{cccc}
2& 0& 0& 0\\
0& 2\frac{\kappa^2}{c} tr(\bSigma^4)+4\frac{\kappa}{c}\bmu^T\bSigma^3\bmu & 2\frac{\kappa}{c}\mathbf m^T\bSigma^3\bmu& 0\\
0& 2\frac{\kappa}{c}\mathbf m^T\bSigma^3\bmu & \frac{\kappa}{c}\mathbf m^T\bSigma^3\mathbf m& 0\\
0&0&0&1\\
\end{array}
\right)\\
&\times&
\left(
\begin{array}{c}
\mathbf m^T \bSigma\bmu\\
0\\
1\\
\left[\left[\kappa tr(\bSigma^2)+\bmu^T \bSigma\bmu\right]\mathbf m^T\bSigma\mathbf{m}-\left(\mathbf m^T\bSigma\bmu\right)^2\right]^{\frac{1}{2}}
\end{array}\right)\\
&=& \left(\mathbf m^T \bSigma\bmu\right)^2+\mathbf m^T\bSigma\mathbf{m}
\left[\kappa tr(\bSigma^2)+\bmu^T \bSigma\bmu\right]+\frac{\kappa}{c}\mathbf m^T\bSigma^3\mathbf{m}.
\end{eqnarray*}
}
\end{proof}

Finally, we extend the results of Theorem \ref{th5} to the case of finite number of linear combinations of the elements of $\mathbf{Az}$. The results are summarised in the following theorem.

\begin{theorem}\label{th6}
Let $\mathbf A \sim \mathcal W _{k} (n, \bSigma)$ with $rank (\bSigma)=r < p$ and let $\mathbf z \sim \mathcal N _k (\bmu, \kappa \bSigma)$,$\kappa>0$. Assume $\frac{r}{n}=c+o(n^{-1/2}), c\in [0,+\infty)$ and $\kappa r=O(1)$ as $n\rightarrow \infty$.Let $\mathbf M = (\mathbf{m}_1,\ldots,\mathbf{m}_p)^T : p \times k$ be a matrix of constants of rank $p<r\le n$ with probability one and let $|\bm u_i^T\mathbf m_j|\leq L_2$ for all $i=1,\ldots,r$ and $j=1,\ldots,p$ uniformly on $k$. Assume that $\mathbf A$ and $\mathbf z$ are independently distributed. Then under (A1) and (A2) the asymptotic distribution of $\mathbf{MAz}$ under the double asymptotic regime is given by
\begin{eqnarray*}
\sqrt{n}\mathbf{\Omega}^{-1/2}\left(\frac{1}{n}\mathbf M\mathbf{Az}-\mathbf M\bSigma\mathbf z\right)&\stackrel{d}{\longrightarrow}\mathcal N_p
\left(\mathbf{0},\mathbf{I}_p\right)
\end{eqnarray*}
where
\begin{eqnarray}
\mathbf{\Omega}=\mathbf M\bSigma\bmu\bmu^T\bSigma\mathbf M^T+\mathbf M\bSigma\mathbf M^T\left[\kappa tr(\bSigma^2)+\bmu^T\bSigma\bmu\right]+ \frac{\kappa}{c}\mathbf M\bSigma^3\mathbf{M}^T.
\end{eqnarray}
\end{theorem}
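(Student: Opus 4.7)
The plan is to invoke the Cramer--Wold device, reducing the $p$-variate limit to the scalar CLT of Theorem \ref{th5}. Fix an arbitrary direction $\mathbf a\in\mathbb R^p$ and set $\mathbf m_{\mathbf a}:=\mathbf M^T\mathbf a$, so that the one-dimensional projection $\mathbf a^T\mathbf{MAz}=\mathbf m_{\mathbf a}^T\mathbf{Az}$ falls exactly in the scope of Theorem \ref{th5}. Once the scalar CLT holds for every such $\mathbf a$ with asymptotic variance $\mathbf a^T\mathbf\Omega\mathbf a$, Cramer--Wold lifts it to $\sqrt n\bigl(n^{-1}\mathbf{MAz}-\mathbf M\bSigma\bmu\bigr)\stackrel{d}{\longrightarrow}\mathcal N_p(\mathbf 0,\mathbf\Omega)$, and pre-multiplying by $\mathbf\Omega^{-1/2}$ produces the announced standard Gaussian form.

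Before applying Theorem \ref{th5} I would verify its hypotheses for the vector $\mathbf m_{\mathbf a}$. The nondegeneracy $\mathbf m_{\mathbf a}^T\bSigma\mathbf m_{\mathbf a}=\mathbf a^T\mathbf M\bSigma\mathbf M^T\mathbf a>0$ for $\mathbf a\neq\mathbf 0$ follows from Proposition \ref{th1}: the rank condition $\mathrm{rank}(\mathbf M)=p\le r$ in the hypotheses of Theorem \ref{th6} forces $\mathbf M\bSigma\mathbf M^T$ to be positive definite of full rank $p$. Assumption (A2) transfers linearly, since $|\mathbf u_i^T\mathbf m_{\mathbf a}|\le\sum_{j=1}^p|a_j|\,|\mathbf u_i^T\mathbf m_j|\le L_2\|\mathbf a\|_1$ uniformly in $k$, while (A1) concerns $\bSigma$ alone and is unaffected.

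Theorem \ref{th5} then delivers
\begin{equation*}
\sqrt n\bigl(n^{-1}\mathbf m_{\mathbf a}^T\mathbf{Az}-\mathbf m_{\mathbf a}^T\bSigma\bmu\bigr)\stackrel{d}{\longrightarrow}\mathcal N\bigl(0,\sigma^2(\mathbf a)\bigr),
\end{equation*}
with $\sigma^2(\mathbf a)=(\mathbf m_{\mathbf a}^T\bSigma\bmu)^2+\mathbf m_{\mathbf a}^T\bSigma\mathbf m_{\mathbf a}\bigl[\kappa\,tr(\bSigma^2)+\bmu^T\bSigma\bmu\bigr]+\tfrac{\kappa}{c}\mathbf m_{\mathbf a}^T\bSigma^3\mathbf m_{\mathbf a}$. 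Substituting $\mathbf m_{\mathbf a}=\mathbf M^T\mathbf a$ rewrites each summand as a quadratic form in $\mathbf a$---respectively $\mathbf a^T\mathbf M\bSigma\bmu\bmu^T\bSigma\mathbf M^T\mathbf a$, $\bigl[\kappa\,tr(\bSigma^2)+\bmu^T\bSigma\bmu\bigr]\mathbf a^T\mathbf M\bSigma\mathbf M^T\mathbf a$, and $\tfrac{\kappa}{c}\mathbf a^T\mathbf M\bSigma^3\mathbf M^T\mathbf a$---whose sum is $\mathbf a^T\mathbf\Omega\mathbf a$. Since $\mathbf a$ is arbitrary, Cramer--Wold yields the joint $p$-dimensional Gaussian limit with covariance $\mathbf\Omega$.

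The principal obstacle is reconciling the centering $\mathbf m_{\mathbf a}^T\bSigma\bmu$ produced by Theorem \ref{th5}---which lifts under Cramer--Wold to $\mathbf M\bSigma\bmu$---with the form $\mathbf M\bSigma\mathbf z$ appearing in the statement of Theorem \ref{th6}. The gap $\sqrt n\,\mathbf M\bSigma(\mathbf z-\bmu)$ is Gaussian with covariance $n\kappa\mathbf M\bSigma^3\mathbf M^T$, a term that under $\kappa r=O(1)$ and $r/n\to c$ is of the same analytic type as the $\tfrac{\kappa}{c}\mathbf M\bSigma^3\mathbf M^T$ contribution to $\mathbf\Omega$, and it is in fact this very dependence that is tracked by the joint CLT for $(\mathbf z^T\bSigma\mathbf z,\mathbf m^T\bSigma\mathbf z)$ in display \eqref{th5-Eq5} inside the proof of Theorem \ref{th5}. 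Consequently both centerings produce Gaussian limits with the same covariance $\mathbf\Omega$, so after the $\mathbf\Omega^{-1/2}$ normalization the two forms of the theorem coincide; beyond this step, the rest of the argument is Cramer--Wold bookkeeping and the routine algebraic identification $\sigma^2(\mathbf a)=\mathbf a^T\mathbf\Omega\mathbf a$.
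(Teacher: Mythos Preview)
Your approach is exactly the paper's: fix $\mathbf l\in\mathbb R^p$, set $\mathbf m=\mathbf M^T\mathbf l$, apply Theorem~\ref{th5}, and invoke Cram\'er--Wold. The paper's proof is a single sentence to that effect; your additional verification of the hypotheses (positive definiteness of $\mathbf M\bSigma\mathbf M^T$ via Proposition~\ref{th1}, and the bound $|\mathbf u_i^T\mathbf m_{\mathbf a}|\le L_2\|\mathbf a\|_1$) is more careful than the paper itself.

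You are right to flag the centering discrepancy: Theorem~\ref{th5} centers at $\mathbf m^T\bSigma\bmu$, whereas the displayed statement of Theorem~\ref{th6} centers at $\mathbf M\bSigma\mathbf z$. This is almost certainly a typographical slip in the statement---the paper's own proof reduces verbatim to Theorem~\ref{th5} and hence delivers the centering $\mathbf M\bSigma\bmu$, and the $\mathbf\Omega$ displayed matches that centering. Your final paragraph, however, is not a correct resolution. The claim that ``both centerings produce Gaussian limits with the same covariance $\mathbf\Omega$'' fails: re-running the delta method from the proof of Theorem~\ref{th5} on the function $g(x_1,x_2,x_3,x_4)=(x_1-1)x_3+\sqrt{x_1}\,[x_2\,\mathbf m^T\bSigma\mathbf m-x_3^2]^{1/2}x_4$ (which corresponds to centering at $\mathbf m^T\bSigma\mathbf z$) gives gradient $(\mathbf m^T\bSigma\bmu,0,0,[\cdots]^{1/2})$ at the limit point, so the third coordinate---the one that picks up $\tfrac{\kappa}{c}\mathbf m^T\bSigma^3\mathbf m$ from the asymptotic covariance---drops out. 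The resulting variance is $\sigma^2-\tfrac{\kappa}{c}\mathbf m^T\bSigma^3\mathbf m$, not $\sigma^2$. So the two centerings are \emph{not} interchangeable at the level of $\mathbf\Omega$; the correct fix is simply to read the centering as $\mathbf M\bSigma\bmu$, after which your Cram\'er--Wold argument goes through without further work.
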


\begin{proof}
For all $\mathbf{l}\in \mathbb{R}^p$-fixed we consider $\mathbf{l}^T\mathbf{MAz}$. The rest of the proof follows from Theorem \ref{th5} with $\mathbf{m}=\mathbf{M}^T \mathbf{l}$ and the fact that $\mathbf{l}$ is an arbitrary vector.
\end{proof}

%%%%%%%%%%%%%%%%%%%%%%%%%%%%%%%%%%%%%%%%%%%%%%%%%
\section{Finite sample performance}
%%%%%%%%%%%%%%%%%%%%%%%%%%%%%%%%%%%%%%%%%%%%%%%%%%%%%%
In this section we present the results of a Monte Carlo simulation study where the performance of the obtained asymptotic distribution for the product of a singular Wishart matrix and a singular Gaussian vector is investigated.

In our simulation we fix $\mathbf{m}=\mathbf{1}/k$ where $\mathbf{1}$ denotes the $k$-dimensional vector of ones and generated each element of $\bmu$ from the uniform distribution on $[-1,1]$. The population covariance matrix was drawn in the following way:
\begin{itemize}
\item $r$ non-zero eigenvalues of $\bSigma$ were generated from the uniform distribution on $(0,1)$ and the rest were set to be zero;
\item the eigenvectors were generated form the Haar distribution by simulating a Wishart matrix with identity covariance matrix and calculating its eigenvectors.
\end{itemize}
Both the mean vector and the population covariance matrix obtained by such setting satisfy the assumptions (A1) and (A2).

We compare the asymptotic density of the standardized random variable $\mathbf m^T\mathbf{Az}$ with its finite-sample one which is obtained by applying the stochastic representation of Corollary \ref{c2}. More precisely, we draw $N=10^4$ independent realizations of the standardized random variable $\mathbf m^T\mathbf{Az}$ by using the following algorithm
\begin{description}
\item [a)] generate $\mathbf m^T\mathbf{Az}$ by using stochastic representation \eqref{c2_sp} of Corollary \ref{c2} expressed as
\begin{equation*}
\mathbf m ^T \mathbf {Az}
\stackrel{d}{=}
\zeta \mathbf m^T \bSigma \mathbf z  +\sqrt{\zeta}
\left[\mathbf z^T \bSigma \mathbf z \cdot \mathbf m^T \bSigma \mathbf m - (\mathbf m^T \bSigma \mathbf z)^2\right]^{1/2} z_0,
\end{equation*}
where $\zeta \sim \chi^2_n$, $z_0 \sim \mathcal N (0,1)$, $\mathbf z \sim \mathcal N _k (\bmu, \kappa \bSigma)$; $\zeta$, $z_0$, and $\mathbf z$ are mutually independent.
\item [b)] compute
\begin{eqnarray*}\sqrt{n}\sigma^{-1}\left(\frac{1}{n}\mathbf m ^T \mathbf {Az}- \mathbf m ^T \mathbf {\Sigma}\bmu\right)
\end{eqnarray*}
 where
 \begin{eqnarray*}
 \sigma^2= \left(\mathbf m^T \bSigma\bmu\right)^2+\mathbf m^T\bSigma\mathbf{m}
\left[\kappa tr(\bSigma^2)+\bmu^T \bSigma\bmu\right]+\frac{\kappa}{c}\mathbf m^T\bSigma^3\mathbf{m}.
 \end{eqnarray*}
\item [c)] repeat a)-b) $N$ times.
\end{description}
Then, the elements of the generated sample are used to construct a kernel density estimator which is compared to the asymptotic distribution, i.e. to the density of the standard normal distribution. As a kernel, we make use of Epanechnikov kernel with the bandwidth chosen by applying the Silverman's rule of thumb.

The results of the simulation study are summarized in Figure \ref{fig1} for $n=500$ and in Figure \ref{fig2} for $n=1000$. In both cases we set $\kappa=1/n$. Finally, $k=750$ is chosen for $n=500$ and $k \in\{750, 990\}$ for $n=1000$. Furthermore, several values of $r$ are considered such that $c=\{0.1,0.5,0.8,0.95\}$. The finite sample distributions are shown as dashed line, while the asymptotic distributions are solid lines. All obtained results show a good performance of the asymptotic approximation which is almost indistinguishable from the corresponding finite sample density. This result continues to be true even in the extreme case of $c=0.95$.

%%%%%%%%%%%%%%%%%%%%%%%%%%%%%%%%%%%%%%%%%%%%%%%%%%%%%%%%
%%%%% FIGURES%%%%%%%%%%%%%%%%%%%%%%%%%%%%%%%%%%%%%%%%%%%
%%%%%%%%%%%%%%%%%%%%%%%%%%%%%%%%%%%%%%%%%%%%%%%%%%
\begin{figure}
\subfigure[$n=500,c=0.1,k=750$]{\includegraphics[width = 3.5in]{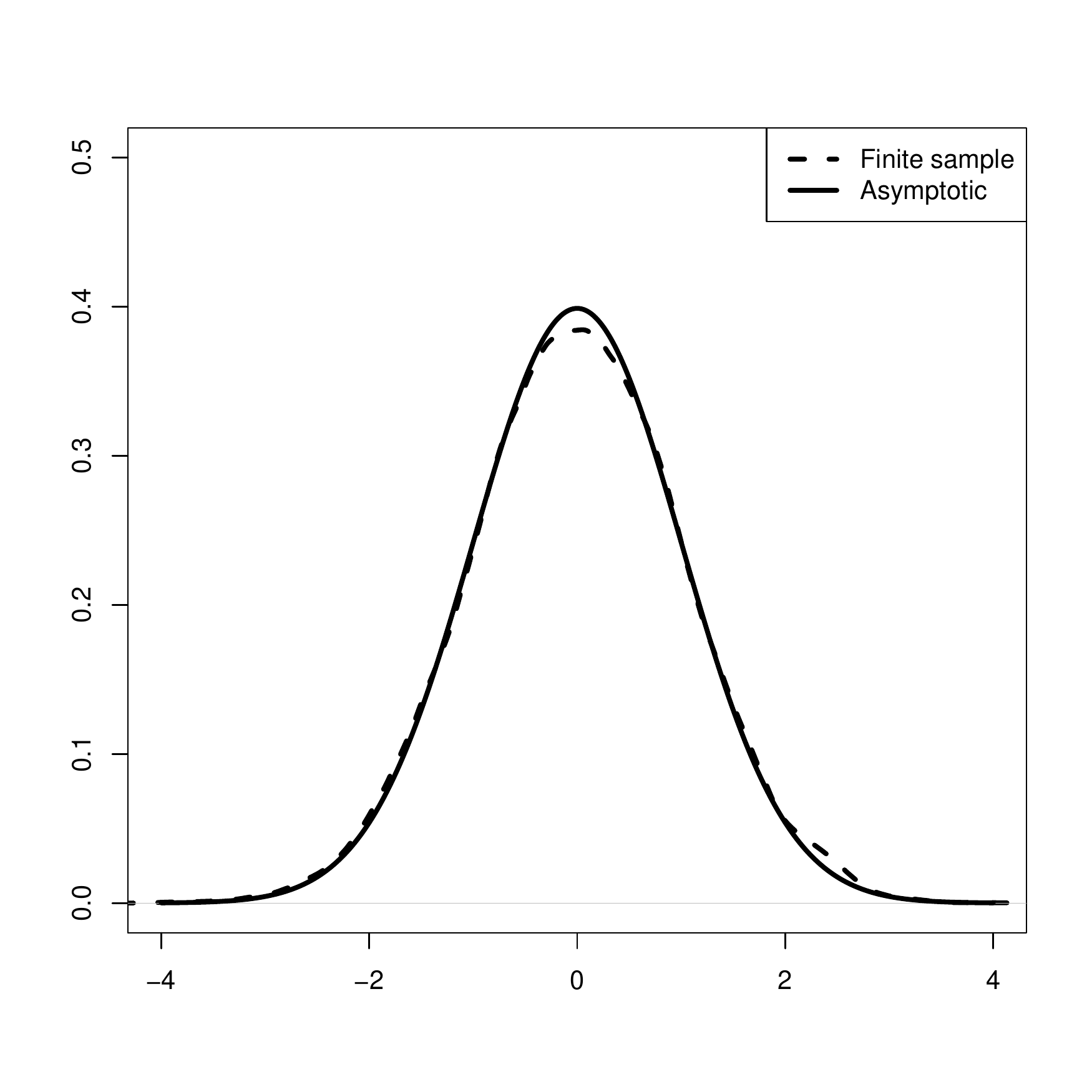}}
\subfigure[$n=500,c=0.5,k=750$]{\includegraphics[width = 3.5in]{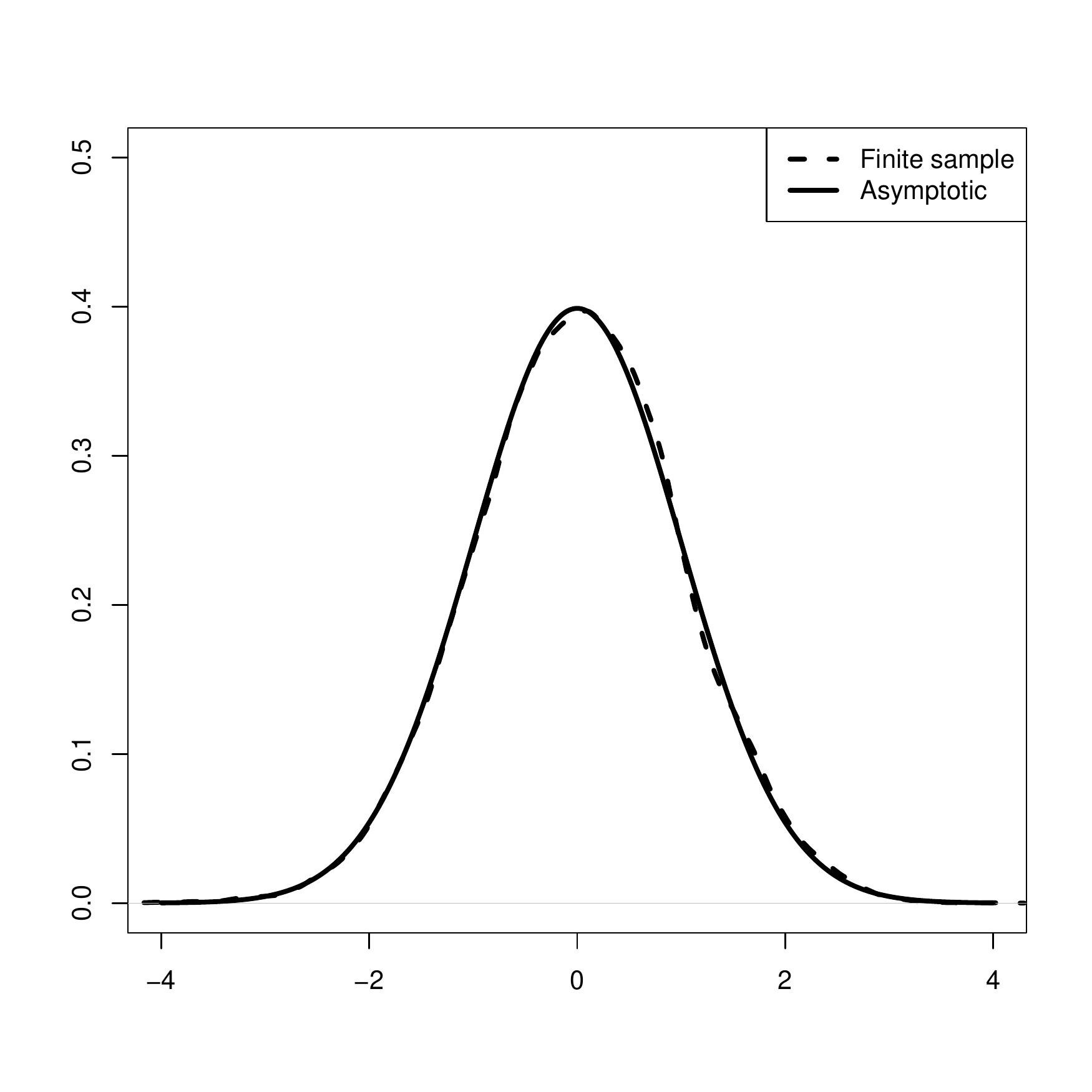}}
\subfigure[$n=500,c=0.8,k=750$]{\includegraphics[width = 3.5in]{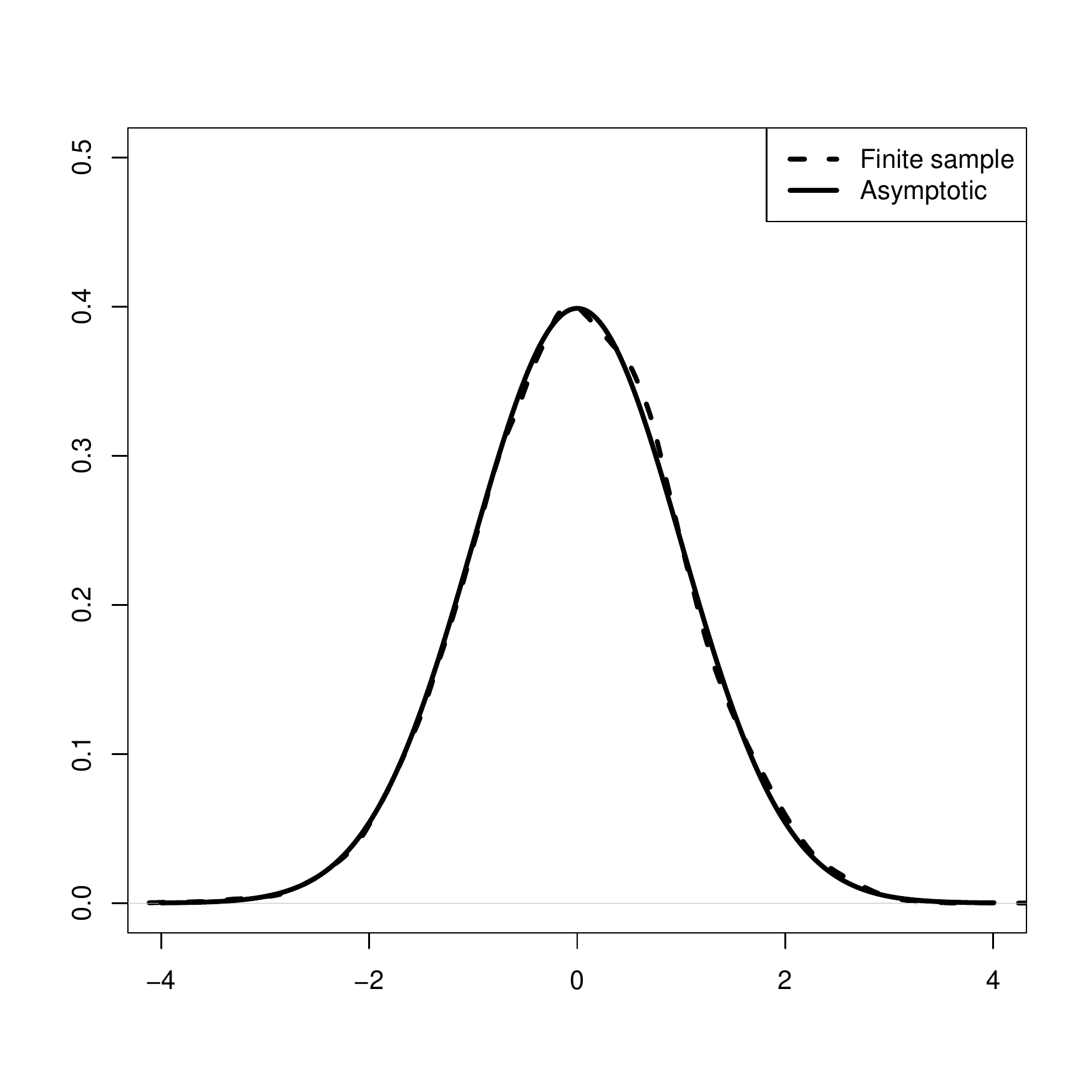}}
\subfigure[$n=500,c=0.95,k=750$]{\includegraphics[width = 3.5in]{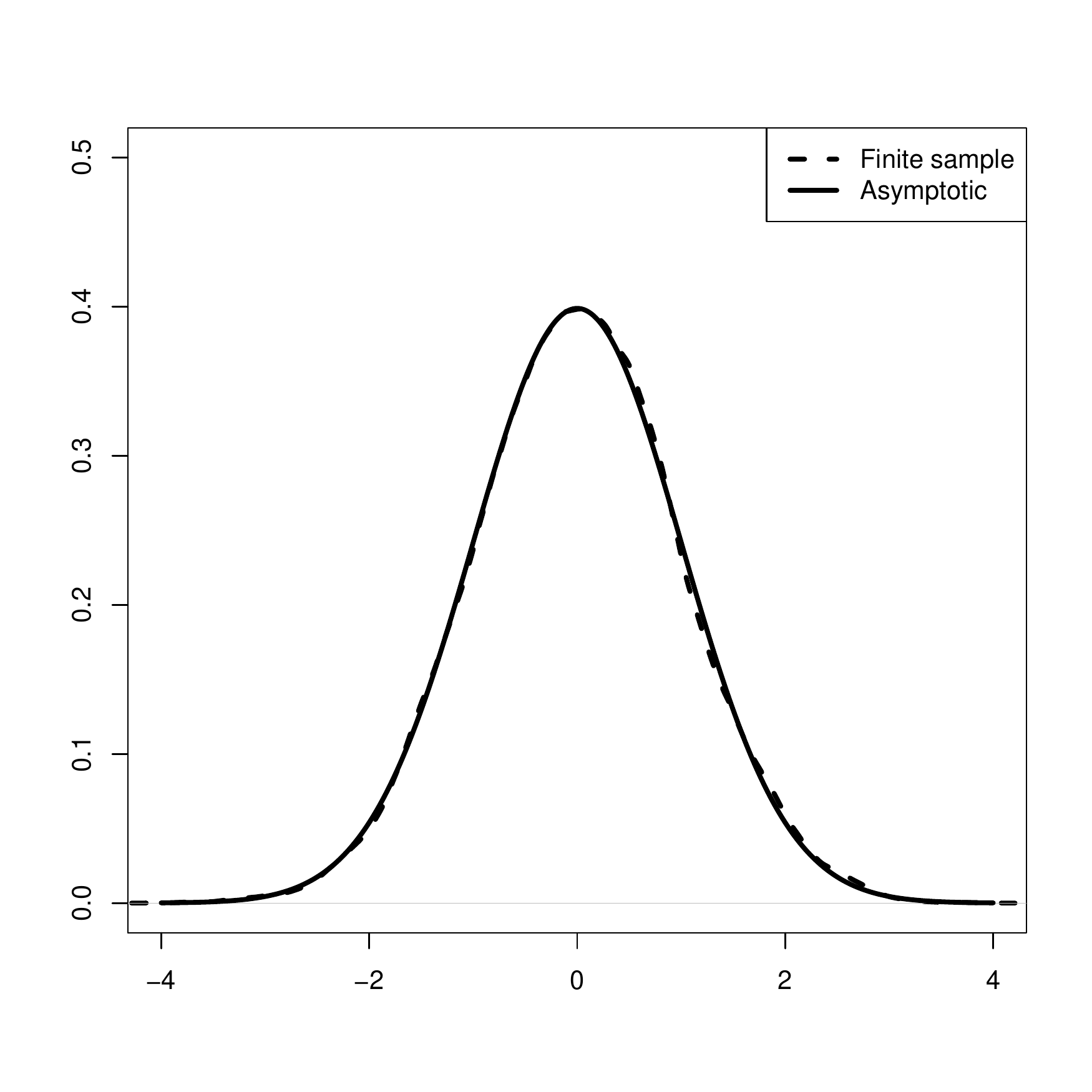}}
\caption{Asymptotic distribution and the kernel density estimator of the finite sample distribution calculated for the product of a singular Wishart matrix and a singular normal vector ($n=500$).}
\label{fig1}
\end{figure}

\begin{figure}
\subfigure[$n=1000,c=0.1,k=750$]{\includegraphics[width = 3.5in]{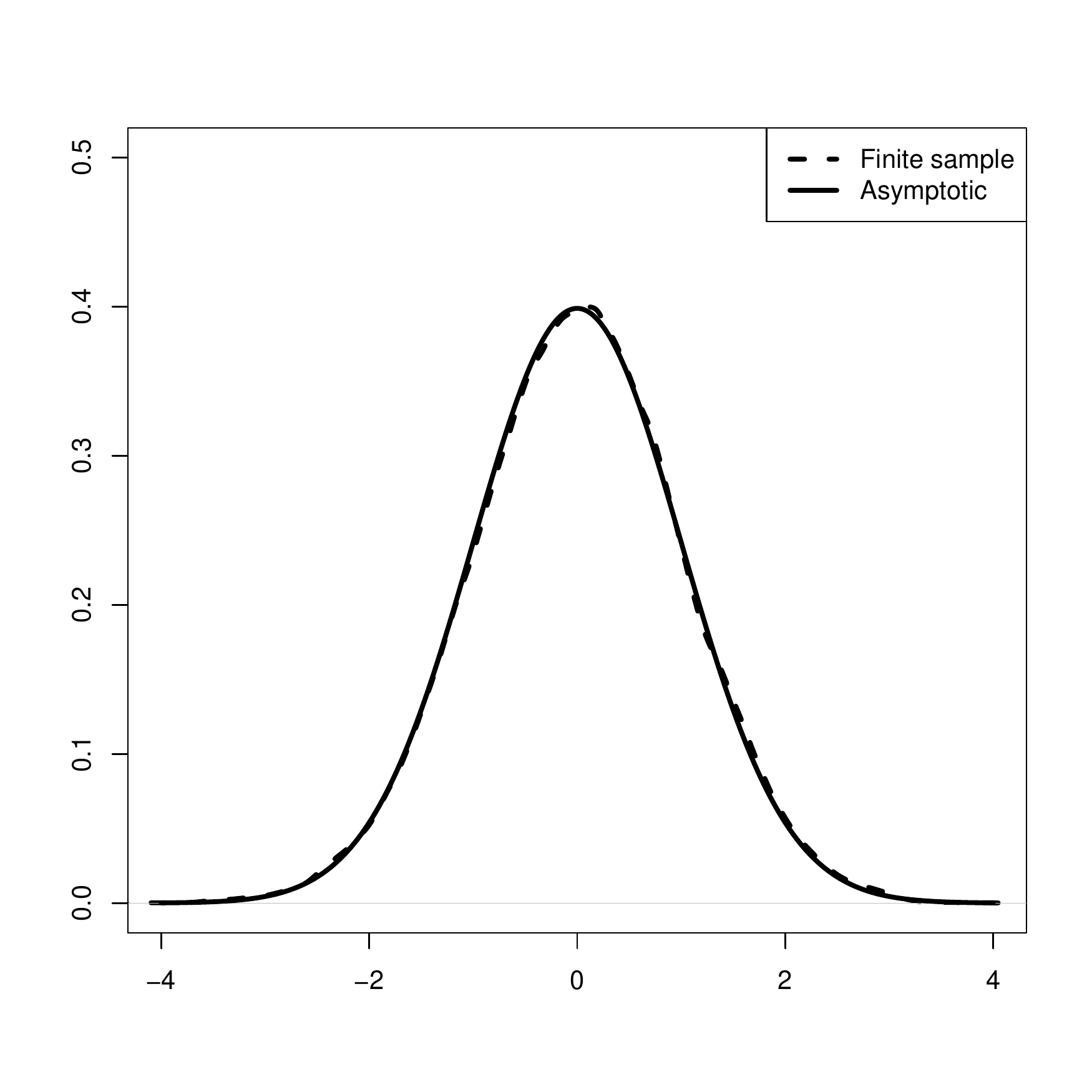}}
\subfigure[$n=1000,c=0.5,k=750$]{\includegraphics[width = 3.5in]{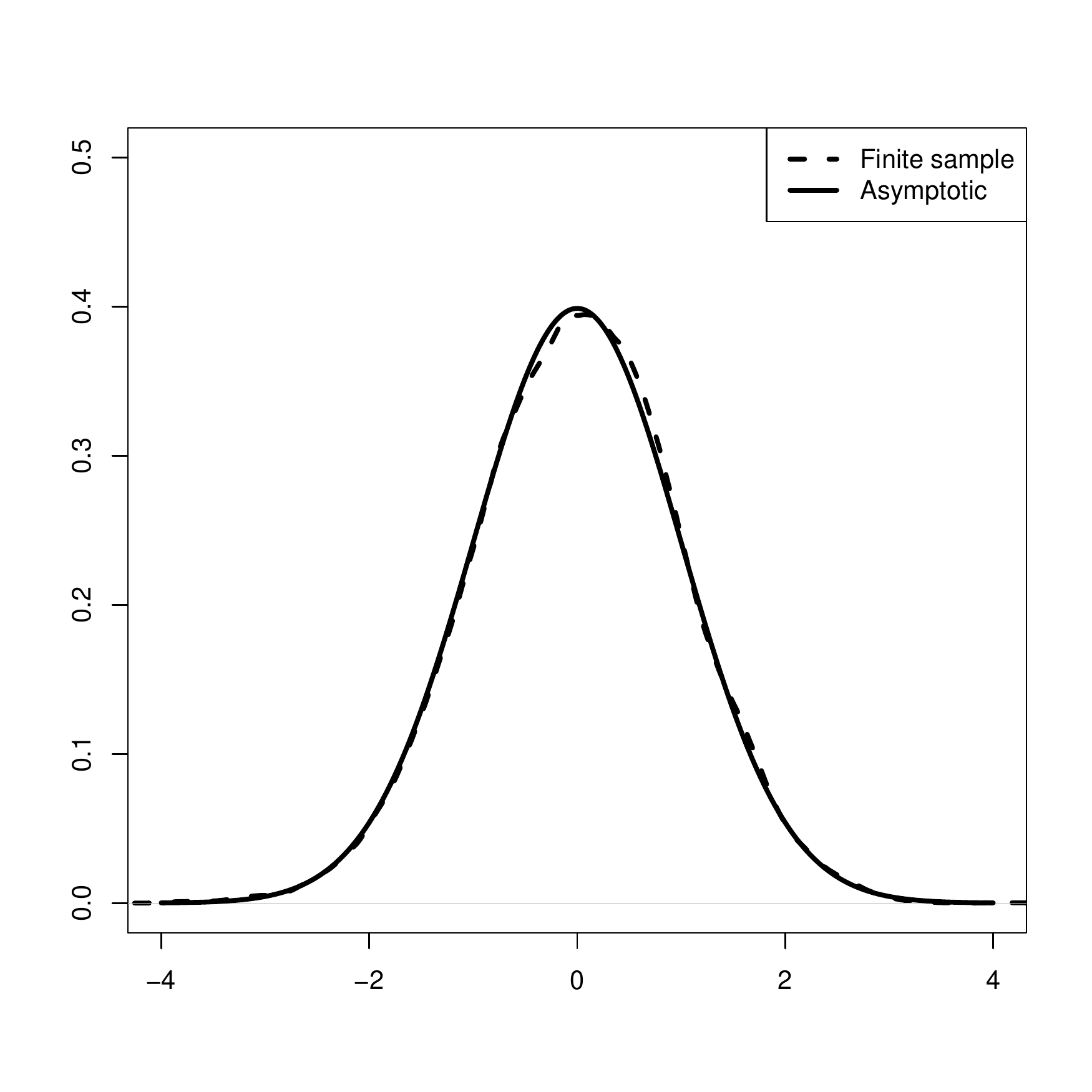}}
\subfigure[$n=1000,c=0.8,k=990$]{\includegraphics[width = 3.5in]{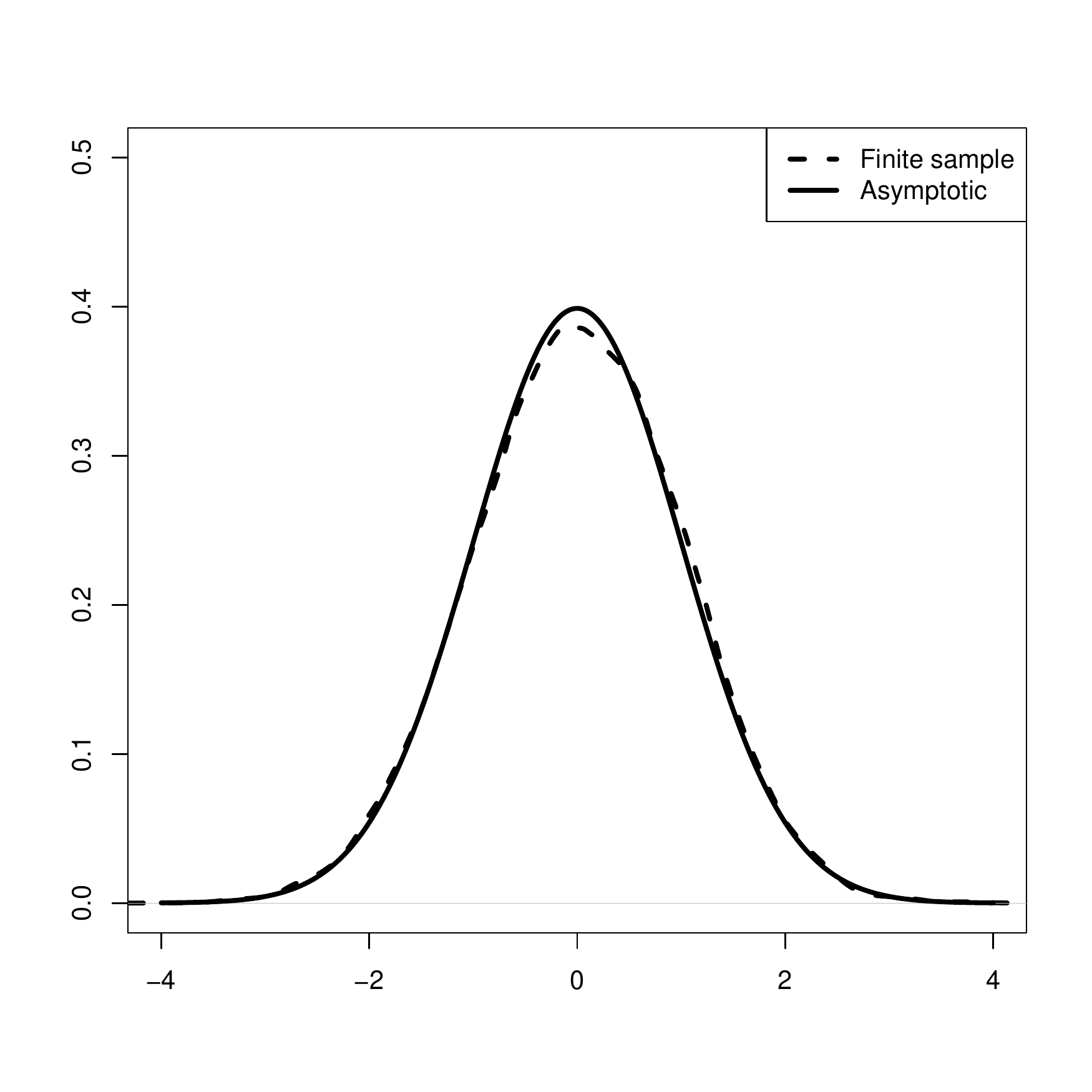}}
\subfigure[$n=1000,c=0.95,k=990$]{\includegraphics[width = 3.5in]{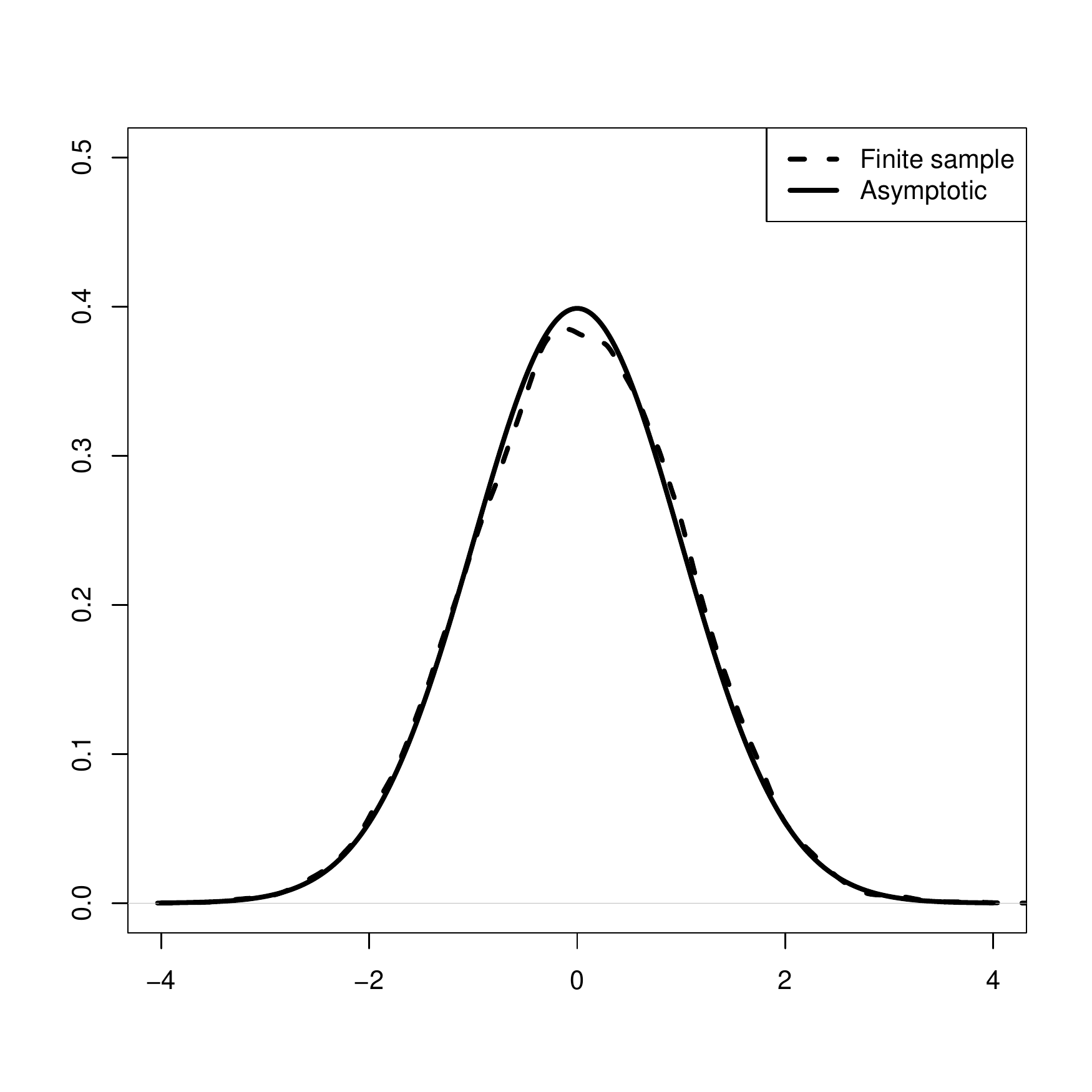}}
\caption{Asymptotic distribution and the kernel density estimator of the finite sample distribution calculated for the product of a singular Wishart matrix and a singular normal vector ($n=1000$).}
\label{fig2}
\end{figure}

%%%%%%%%%%%%%%%%%%%%%%%%%%%%%%%%%%%%%%%%%%%%%%%%%%%%%%%%%%%%%%%%%%%%
\section{Summary}
Wishart distribution and normal distribution are widely spread in both statistics and probability theory with numerous and useful applications in finance, economics, environmental sceinces, biology, etc. Different functions involving a Wishart matrix and a normal vector have been studied in statistical literature recently. However, to the best of our knowledge, combinations of a singular Wishart matrix and a singular normal vector have not been investigated up to now.

In this paper we analyse the product of a singular Wishart matrix and a singular Gaussian vector. A very useful stochastic representation of this product is obtained, which is later used to derive its characteristic function as well as to provide an efficient way how the elements of the product could be simulated in practice. With the use of the derived stochastic representation there is no need in generating a large dimensional Wishart matrix. Its application speeds up simulation studies where the product of a singular Wishart matix and a singular normal vector is present. Furthermore, we prove the asymptotic normality of the product under the double asymptotic regime. In a numerical study, a good performance of the obtained asymptotic distribution is documented. Even in the extreme case of $c=0.95$ it produces a very good approximation of the corresponding finite sample distribution obtained by applying the derived stochastic representation.

%%%%%%%%%%%%%%%%%%%%%%%%%%%%%%%%%%%%%%%%%%%%%%%%%%%%%%%%%%%%%%%%%%%%
\bibliography{GIW}
\end{document}